\newcommand{\application}{\bullet} 
\newcommand{\ip}{\rotatebox[origin=c]{180}{$\pi$}}
\begin{document}

\title{A Structural Account of Combinatory Completeness}
\author{
  Ivan Kuzmin\thanks{Ivan Kuzmin and Chad Nester were supported by the Estonian Research Council grant PRG2764.} \\ University of Tartu \\ ivan.kuzmin@ut.ee
\and
  Chad Nester\thanksas{1} \\ University of Tartu \\ nester@ut.ee
\and
  {\"{U}}lo Reimaa\thanks{Ülo Reimaa was supported by the Estonian Research Council grant PRG1204.} \\ University of Tartu \\ ulo.reimaa@ut.ee
\and
  Sam Speight\thanks{Sam Speight was supported by the Engineering and Physical Sciences Research Council grant EP/W034514/1.} \\ University of Birmingham \\ s.l.speight@bham.ac.uk
}
\address{Chad Nester, Institute of Computer Science, University of Tartu, Narva mnt 18, 51009, Tartu, Estonia}

\runninghead{I. Kuzmin, C. Nester, \"{U}. Reimaa, S. Speight}{A Structural Account of Combinatory Completeness}

\maketitle

\begin{abstract}
  We give a general notion of combinatory completeness with respect to a faithful cartesian club and use it systematically to obtain characterisations of a number of different kinds of applicative system. Moreover, we characterise combinatory completeness of a given applicative system in terms of multicategory structure on its computable maps.

\keywords{Category Theory, Combinatory Logic, Categorical Logic}
\end{abstract}

\section{Introduction}\label{sec:intro}

Combinatory logic was introduced, essentially independently, by Sch\"{o}nfinkel and Curry~\cite{Schonfinkel1924,Curry1930}, and has played a fundamental role in the development of logic and computer science during the century that followed (see e.g.,~\cite{Cardone2006}).

A \emph{combinatory algebra} is an algebraic model of combinatory logic~\cite{Barendregt1981}. One defines an \emph{applicative system} to be a set $A$ of \emph{combinators} together with a (total) function $\application: A \times A \to A$ called \emph{application}. Adopting the usual conventions, we treat application as a left-associative infix binary operation whose symbol is often omitted, so that $xy = x \application y = \application(x,y)$ and $xyz = (xy)z$. Next, we define:
\begin{itemize}
\item a $\mathsf{B}$ combinator to be some $\mathsf{B} \in A$ such that $\mathsf{B}xyz = x(yz)$ for all $x,y,z \in A$.
\item a $\mathsf{C}$ combinator to be some $\mathsf{C} \in A$ such that $\mathsf{C}xyz = xzy$ for all $x,y,z \in A$.
\item a $\mathsf{K}$ combinator to be some $\mathsf{K} \in A$ such that $\mathsf{K}xy = x$ for all $x,y \in A$.
\item a $\mathsf{W}$ combinator to be some $\mathsf{W} \in A$ such that $\mathsf{W}xy = xyy$ for all $x,y \in A$.
\item an $\mathsf{I}$ combinator to be some $\mathsf{I} \in A$ such that $\mathsf{I}x = x$ for all $x \in A$.
\end{itemize}
We call applicative systems which have some subset $H$ of these combinators $H$-algebras, so that for example a $\mathsf{BI}$-algebra is an applicative system with a $\mathsf{B}$ and $\mathsf{I}$ combinator, and a $\mathsf{BCI}$-algebra is a $\mathsf{BI}$-algebra with a $\mathsf{C}$ combinator. Then in particular a combinatory algebra is defined to be a $\mathsf{BCKWI}$-algebra\footnote{It is of course possible to define combinatory algebras using fewer combinators, and in fact the $\mathsf{I}$ combinator is redundant in our presentation since it is obtainable from $\mathsf{K}$ and $\mathsf{W}$. We feel that this larger combinator basis is better suited to our aims herein.}.

Given an applicative system $(A,\application)$ one defines a \emph{polynomial} in variables $x_1,\ldots,x_n$ to be one of: a variable $x_i$ where $1 \leq i \leq n$; an element $a \in A$; or $t \application s$ where $t$ and $s$ are polynomials in $x_1,\ldots,x_n$. A polynomial $t$ is said to be \emph{computable} in case there exists $a \in A$ such that for all $b_1,\ldots,b_n \in A$ we have:
\[
   ab_1 \cdots b_n = t[b_1,\ldots,b_n/x_1,\ldots,x_n]
\]
For example, if the applicative system in question has a $\mathsf{W}$ combinator, then the polynomial $x_1x_2x_2$ is computable. An applicative system in which every polynomial is computable is said to be \emph{combinatory complete}. This turns out to be equivalent to being a combinatory algebra, as in:
\begin{theorem}[After \cite{Curry1958}, Chapter 6]\label{thm:classical-combinatory-complete}
  An applicative system $(A,\application)$ is combinatory complete if and only if it is a combinatory algebra.
\end{theorem}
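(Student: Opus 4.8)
The plan is to prove the two implications separately, with essentially all of the content living in the direction that turns a combinatory algebra into a combinatory complete system.

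The ``only if'' direction is immediate from the definitions. Suppose $(A,\application)$ is combinatory complete. Each of the five required combinators is nothing but a computability witness for a suitably chosen polynomial. Computability of the polynomial $x_1 \application (x_2 \application x_3)$ in variables $x_1,x_2,x_3$ yields an $a$ with $a b_1 b_2 b_3 = b_1(b_2 b_3)$, i.e.\ a $\mathsf{B}$ combinator; likewise $x_1 x_3 x_2$ gives a $\mathsf{C}$ combinator, the polynomial $x_1$ in the two variables $x_1,x_2$ gives a $\mathsf{K}$ combinator, $x_1 x_2 x_2$ gives a $\mathsf{W}$ combinator, and $x_1$ in the single variable $x_1$ gives an $\mathsf{I}$ combinator. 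Hence $A$ is a $\mathsf{BCKWI}$-algebra.

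For the ``if'' direction I would set up bracket abstraction. For a variable $x$ and a polynomial $t$, define a polynomial $\lambda^{*}x.\,t$ in the remaining variables by induction on $t$: put $\lambda^{*}x.\,x = \mathsf{I}$; put $\lambda^{*}x.\,t = \mathsf{K}\,t$ whenever $x$ does not occur in $t$ (this covers the other variables and the elements of $A$); and for an application set
\[
  \lambda^{*}x.\,(t\application s) =
  \begin{cases}
    \mathsf{B}\,t\,(\lambda^{*}x.\,s) & x \text{ occurs only in } s,\\
    \mathsf{C}\,(\lambda^{*}x.\,t)\,s & x \text{ occurs only in } t,\\
    \mathsf{W}(\mathsf{B}(\mathsf{C}\mathsf{B}(\lambda^{*}x.s))(\lambda^{*}x.t)) & x \text{ occurs in both.}
  \end{cases}
\]
The key lemma, proved by induction on $t$ using only the defining equations of the combinators, is that $(\lambda^{*}x.\,t)\application b = t[b/x]$ holds after any substitution of elements of $A$ for the remaining variables. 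Granting this, I would iterate: for a polynomial $t$ in $x_1,\ldots,x_n$ form the closed polynomial $a = \lambda^{*}x_1.\,\lambda^{*}x_2.\cdots\lambda^{*}x_n.\,t$, which, being built solely from elements of $A$ and application, names an element of $A$; applying the lemma $n$ times then gives $a b_1 \cdots b_n = t[b_1,\ldots,b_n/x_1,\ldots,x_n]$ for all $b_i$, so $t$ is computable and $A$ is combinatory complete.

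The main obstacle is the application case of the key lemma, and specifically the subcase where $x$ occurs in both $t$ and $s$: here a single argument must be routed to two distinct positions, which is exactly the structural content of contraction and is the reason the $\mathsf{W}$ combinator (together with the re-bracketing combinators $\mathsf{B}$ and $\mathsf{C}$) is needed. Verifying that $\mathsf{W}(\mathsf{B}(\mathsf{C}\mathsf{B}(\lambda^{*}x.s))(\lambda^{*}x.t))$ applied to $b$ reduces, via the equations for $\mathsf{W}$, $\mathsf{B}$, and $\mathsf{C}$ in turn, to $(t[b/x])\application(s[b/x])$ is the one genuinely calculational step; the other cases are short computations that make transparent the correspondence between $\mathsf{I},\mathsf{K},\mathsf{B},\mathsf{C},\mathsf{W}$ and the structural rules of identity, weakening, composition, exchange, and contraction.
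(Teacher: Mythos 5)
Your proof is correct, but it takes a genuinely different route from the paper. The paper never proves this theorem directly: it is cited from Curry and then recovered as the instance, in the $\mathbf{Fun}$-multicategory $\mathsf{Set}$, of the general result that an applicative system in a $\mathbf{Fun}$-multicategory is $\mathbf{Fun}$-combinatory complete iff it is a $\mathsf{BCKWI}$-algebra. That general proof is organised around the generators of the club rather than around the syntax of a single polynomial: first, every pure application tree (a ``binary bracketing'' of variables) is shown computable by induction using only $\mathsf{B}$ and $\mathsf{I}$, via the iterated combinators $\mathsf{B}^n$ of Lemma~\ref{lem:iterated-b}; then the computable maps are shown closed under the action of each generating structural map, with a transposition $\tau^n_i$ simulated by $\mathsf{B}^{i-1}\mathsf{C}$, a face map $\delta^n_i$ by $\mathsf{B}^{i-1}\mathsf{K}$, and a degeneracy $\sigma^n_i$ by $\mathsf{B}^{i-1}\mathsf{W}$. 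Your bracket-abstraction argument instead eliminates variables one at a time by induction on the term, with the occurrence analysis in the application case deciding which combinator to deploy; the $\mathsf{W}$-case term $\mathsf{W}(\mathsf{B}(\mathsf{C}\mathsf{B}(\lambda^{*}x.s))(\lambda^{*}x.t))$ does reduce to $(t[b/x])\application(s[b/x])$ as you claim, and the tacit lemma that $\lambda^{*}x$ commutes with substitution for the \emph{other} variables is unproblematic. What your approach buys is a short, self-contained, purely classical proof; what it loses is the modularity the paper is after: the paper's decomposition by generators is exactly what lets the same skeleton yield all eight characterisations of Figure~\ref{fig:results-summary} (e.g.\ the $\mathsf{BCI}$ and $\mathsf{BKI}$ fragments fall out by simply omitting generators), whereas your case split on where $x$ occurs would need to be re-examined fragment by fragment, and your argument as written establishes only the single $\mathbf{Fun}$/$\mathsf{BCKWI}$ case in $\mathsf{Set}$.
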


In fact, to obtain a combinatory algebra it is enough to ask that only the \emph{regular} polynomials are computable, where a polynomial is said to be regular in case it contains only variables. For example, if $a \in A$ then $x_1(x_2x_3)$ and $x_2a$ are both polynomials in variables $x_1,x_2,x_3$, but the former is regular while the latter is not. For many classes of polynomial, computability of the regular polynomials implies computability of all polynomials, so that they express the same notion of combinatory completeness. However, this is not true for all such classes. We mention this here because the distinction will appear in our development.

A natural question is whether there exist analogues of Theorem~\ref{thm:classical-combinatory-complete} characterising applicative systems in which only some subset of the distinguished elements of a combinatory algebra need exist. For example, it is known that an applicative system is a $\mathsf{BCI}$-algebra if and only if every \emph{linear} polynomial is computable, where a polynomial $t$ in variables $x_1,\ldots,x_n$ is said to be linear in case each variable $x_1,\ldots,x_n$ occurs exactly once in $t$ (see e.g.,~\cite{Simpson05,Hoshino2007}). As far as we are aware, no satisfying answer to the wider question exists in the literature.

The first contribution of the present paper is an answer to this question. We give a general notion of combinatory completeness and use it to obtain a number of combinatory completeness results in a systematic fashion. Specifically, we obtain combinatory completeness results characterising applicative systems with $\mathsf{B}$ and $\mathsf{I}$ combinators together with most subsets of the combinators $\mathsf{C}$,$\mathsf{K}$, and $\mathsf{W}$. 

Central to our approach is the notion of faithful cartesian club~\cite{Shulman2016}, which is a sort of well-behaved subcategory of the category $\mathbf{Fun}$ of functions between sets $\underline{n} = \{1,\ldots,n\}$ (i.e., the skeleton of the category of finite sets and functions). Every faithful cartesian club $\mathfrak{S}$ determines a notion of structured multicategory whose instances are called $\mathfrak{S}$-multicategories. We work with a more abstract notion of applicative system, in which the carrier $A$ becomes an object of some ambient $\mathfrak{S}$-multicategory $\mathcal{M}$ and application becomes a morphism $\application \in \mathcal{M}(A,A;A)$.

The morphisms of the smallest sub-$\mathfrak{S}$-multicategory of $\mathcal{M}$ containing the application morphism play the role of regular polynomials, and we say that an applicative system is \emph{weakly} $\mathfrak{S}$-combinatory complete when every such morphism is computable in an appropriate sense. Similarly, the morphisms of the smallest sub-$\mathfrak{S}$-multicategory of $\mathcal{M}$ containing the application morphism and all of the generalised elements $a \in \mathcal{M}(;A)$ of the carrier play the role of the full collection of polynomials, and when every such morphism is computable we say that the applicative system in question is $\mathfrak{S}$-combinatory complete.

The definition of $\mathsf{B},\mathsf{C},\mathsf{K},\mathsf{W}$, and $\mathsf{I}$ combinators in an applicative system is easily adapted to applicative systems in $\mathfrak{S}$-multicategories, with the caveat that for certain combinators to be expressible the faithful cartesian club $\mathfrak{S}$ must contain certain functions. Specifically, while the definitions of the $\mathsf{B}$ and $\mathsf{I}$ combinator make sense in an $\mathfrak{S}$-multicategory for any $\mathfrak{S}$, to express the $\mathsf{C}$ combinator we require that $\mathfrak{S}$ contains the bijections, with the $\mathsf{K}$ and $\mathsf{W}$ combinators requiring $\mathfrak{S}$ to contain the monotone injections and monotone surjections, respectively. 

We obtain a number of results (Theorem~\ref{thm:id-combinatory-complete} and Theorem~\ref{thm:structured-combinatory-complete}) relating weak $\mathfrak{S}$-combinatory completeness to the existence of certain combinators, summarised in Figure~\ref{fig:results-summary}. The entries of the first column indicate subcategories of $\mathbf{Fun}$ that form faithful cartesian clubs $\mathfrak{S}$, which we will usually refer to by the short names given in the second column. For example, $\mathbf{Inj}$ is the wide subcategory of $\mathbf{Fun}$ that contains only injective functions as morphisms, and $\mathbf{Id}$ is the wide subcategory of $\mathbf{Fun}$ containing only identity functions. The third column tells us what a weakly $\mathfrak{S}$-combinatory complete applicative system in an $\mathfrak{S}$-multicategory is. For example, the second row states that an applicative system in a $\mathbf{Bij}$-multicategory is weakly $\mathbf{Bij}$-combinatory complete if and only if it is a $\mathsf{BCI}$-algebra. 

\begin{figure}
\begin{center}
\renewcommand{\arraystretch}{1.2}
\setlength{\tabcolsep}{1em}
\begin{tabular}{llll}
\toprule
Club $\mathfrak{S}$ & Short Name & Characterises \\
\midrule
  Identities &  $\mathbf{Id}$ & $\mathsf{BI}$-algebras \\ 
  Bijections & $\mathbf{Bij}$ & $\mathsf{BCI}$-algebras \\
  Monotone Injections & $\mathbf{Minj}$ & $\mathsf{BKI}$-algebras \\
  Injections & $\mathbf{Inj}$ & $\mathsf{BCKI}$-algebras \\
  Surjections &$\mathbf{Srj}$ & $\mathsf{BCWI}$-algebras \\
  Functions & $\mathbf{Fun}$ & $\mathsf{BCKWI}$-algebras \\
\bottomrule
\end{tabular}
\end{center}
\caption{Table of weak combinatory completeness results.}\label{fig:results-summary}
\end{figure}

The second contribution of this paper is a characterisation of $\mathfrak{S}$-combinatory complete applicative systems in terms of multicategory structure. Specifically, we show that an applicative system in an $\mathfrak{S}$-multicategory $\mathcal{M}$ is $\mathfrak{S}$-combinatory complete if and only if its computable maps form a sub-$\mathfrak{S}$-multicategory of $\mathcal{M}$ (Theorem~\ref{thm:combinatory-complete-multicategory}). This is rather satisfying: while the classical notion of combinatory completeness involved in, e.g., Theorem~\ref{thm:classical-combinatory-complete} can seem ad-hoc, Theorem~\ref{thm:combinatory-complete-multicategory} shows that it is in fact entirely natural when viewed from the right perspective.

As an intermediate step, we give a simple characterisation of the difference between applicative systems that are $\mathfrak{S}$-combinatory complete and applicative systems that are (merely) weakly $\mathfrak{S}$-combinatory complete. Specifically, a weakly $\mathfrak{S}$-combinatory complete applicative system $(A,\bullet)$ is $\mathfrak{S}$-combinatory complete if and only if for all $a \in A$ there exists some $a^\bullet \in A$ such that for all $x \in A$ we have $a^\bullet x = xa$ (Theorem~\ref{thm:flipped-complete}). We call such an applicative system \emph{flipped}, noting that a flipped $\mathsf{BI}$-algebra is precisely a $\mathsf{BI^\bullet}$-algebra in the sense of Tomita~\cite{Tomita2021}, who was the first to study them.

Given that combinatory completeness corresponds to multicategory structure on the computable maps, one might reasonably wonder when the multicategory of computable maps has this or that property. We resolve one such question, showing that for a given combinatory complete applicative system the multicategory of computable maps is \emph{closed} in the sense of Manzyuk~\cite{Manzyuk2012} precisely when the applicative system is extensional in an appropriate sense (Theorem~\ref{thm:closed-extensional}). 

One might also ask precisely \emph{which} multicategories arise as the computable maps of some combinatory complete applicative system. We show that it is precisely the \emph{weakly-closed operads} that arise in this way (an operad is a multicategory with exactly one object). Explicitly, the computable maps of any $\mathfrak{S}$-combinatory complete applicative system define a weakly-closed $\mathfrak{S}$-operad, and conversely the single object of a given weakly-closed $\mathfrak{S}$-operad is the carrier of a canonical $\mathfrak{S}$-combinatory complete applicative system therein. This extends to a coreflective adjunction between suitable categories of weakly-closed $\mathfrak{S}$-operads and of $\mathfrak{S}$-combinatory complete applicative systems in some ambient $\mathfrak{S}$-multicategory, witnessing the former as a full subcategory of the latter (Theorem~\ref{thm:adjunction}). In particular, this suggests that weakly-closed operads should be viewed as combinatory complete applicative systems ``standing alone'', or alternatively as combinatory complete applicative systems in the canonical context given by the associated multicategory of computable maps.

If we restrict our attention to the $\mathbf{Fun}$-multicategory $\mathsf{Set}$ with sets as objects and with functions $f : A_1 \times \cdots \times A_n \to B$ as morphisms $f \in \mathsf{Set}(A_1,\ldots,A_n;B)$ then we recover the classical notion of applicative system and of the $\mathsf{B},\mathsf{C},\mathsf{K},\mathsf{W}$, and $\mathsf{I}$ combinators. All of our combinatory completeness results specialise to the classical setting. For example, we obtain Theorem~\ref{thm:classical-combinatory-complete} as an instance of the fact that an applicative system in a $\mathbf{Fun}$-multicategory is $\mathbf{Fun}$-combinatory complete if and only if it is a $\mathsf{BCKWI}$-algebra. In this way, our results apply to the classical notion of applicative system as a set equipped with a binary operation.

This is a revised and expanded version of an earlier conference paper~\cite{Kuzmin2026}. More precisely: while Section~\ref{sec:structured-multicategories} and Section~\ref{sec:results} consist mainly of material from our earlier work~\cite{Kuzmin2026}; Section~\ref{sec:multicategory-structure}, Section~\ref{sec:closed-extensional}, and Section~\ref{sec:weakly-closed-operads} are novel.

\subsection{Related Work}
While the idea of a multicategory has arisen independently a number of times, the work of Lambek~\cite{Lambek1969} is most closely aligned with our purposes here. A more in-depth discussion of the multiple origins of the notion of multicategory may be found in Leinster's treatment~\cite[Chapter 2]{Leinster2004}, which is also an excellent reference in general. The sequent calculus presentation of multicategories originates with Lambek~\cite{Lambek1969,Lambek1989}, but see also the work of Szabo~\cite{Szabo1974,Szabo1971} on the subject. For closed multicategories see Manzyuk~\cite{Manzyuk2012}.

The notions of faithful cartesian club and structured multicategory that we consider in this paper first appear, essentially, in the work of Tronin~\cite{Tronin2002} under the names of ``verbal category'' and ``W-operad'', respectively. Here we follow  Shulman~\cite{Shulman2016} in both our terminology and technical development. Faithful cartesian clubs are an instance of the more general notion of club introduced by Kelly~\cite{Kelly1972,Kelly1972a}. See also the work of Crutwell and Shulman on generalised multicategories~\cite{Crutwell2010}. We found the treatment of structured multicategories in the work of Kr\"{a}mer and Mahaman~\cite{Krahmer2021} to be helpful.

The work presented here grew out of an interest in the work of Cockett and Hofstra on Turing categories~\cite{Cockett2008}, which contains a combinatory completeness result for \emph{partial} combinatory algebras that is similar in spirit to the results presented herein. Moreover, weakly-closed structure plays an important role in the theory of Turing categories, and the situation there has directly inspired the results concerning weakly-closed operads herein. Perhaps more directly relevant is the work of Longo and Moggi~\cite{Longo1990}, which contains a version of Theorem~\ref{thm:classical-combinatory-complete} internal to categories with finite products. We have also been inspired be the work of Hyland~\cite{Hyland2017} and Hasegawa~\cite{Hasegawa2023} on semi-closed multicategories and models of the lambda-calculus.

Finally, we credit the notion of flipped applicative system to Tomita~\cite{Tomita2021,Tomita2022,Tomita2023}, who has studied flipped $\mathsf{BI}$-algebras under the name of $\mathsf{BI}^\bullet$-algebras.

\section{Structured Multicategories}\label{sec:structured-multicategories}
In this section we recapitulate some necessary background material concerning structured multicategories, the attendant notion of faithful cartesian club, and the connection of structured multicategories to sequent calculus. In doing so, we closely follow Shulman~\cite{Shulman2016}.

\subsection{Faithful Cartesian Clubs}\label{subsec:faithful-cartesian-clubs}
The notion of faithful cartesian club revolves around the category of finite ordinals and functions, which we introduce now. For $n \in \mathbb{N}$, write $\underline{n} = \{1,\ldots,n\}$. Let $\mathbf{Fun}$ be the category with natural numbers as objects, and with morphisms $\mathbf{a} : n \to m$ given by functions $\mathbf{a} : \underline{n} \to \underline{m}$. Composition and identities are given by function composition and identity functions, respectively.

Morphisms of $\mathbf{Fun}$ can be understood intuitively as ``dot and line'' diagrams. For example, the morphism indicated below on the right corresponds to the diagram below on the left:
\begin{mathpar}
  \includegraphics[height=1.5cm,align=c]{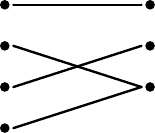} 

  \mathbf{a}(x) =
  \begin{cases}
    1 & \text{if } x = 1\\
    2 & \text{if } x = 3\\
    3 & \text{otherwise}
  \end{cases}

  \mathbf{a} : 4 \to 3
\end{mathpar}
The set $\underline{n}$ is depicted as a sequence of $n$ dots, with the uppermost dot corresponding to $1 \in \underline{n}$, the dot below it corresponding to $2 \in \underline{n}$, and so on. The action of the function is indicated by lines connecting each element of the domain to the element of the codomain that it is mapped to. We note that in general such diagrams indicate a \emph{relation} between finite ordinals, but may nonetheless be used to discuss functions.

We will be interested in the monoidal category structure $(\mathbf{Fun},+,0)$ on $\mathbf{Fun}$ in which $+$ is defined on objects as the usual addition of natural numbers, and is defined on morphisms $\mathbf{a} : n \to m$ and $\mathbf{b} : h \to k$ as in:
\begin{mathpar}
  (\mathbf{a}+\mathbf{b})(x) =
  \begin{cases}
    \mathbf{a}(x) & \text{if } x \leq n\\
    \mathbf{b}(x - n) + m & \text{if } x > n
  \end{cases}
\end{mathpar}
In terms of dot and line diagrams, $\mathbf{a}+\mathbf{b}$ is obtained by vertically ``concatenating'' the diagrams for $\mathbf{a}$ and $\mathbf{b}$, as in:
\begin{mathpar}
  \includegraphics[height=1cm,align=c]{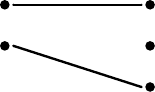} 
  \hspace{0.3cm}+\hspace{0.3cm}
  \includegraphics[height=1cm,align=c]{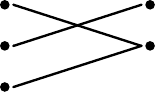} 
  \hspace{1cm}
  =
  \hspace{1cm}
  \includegraphics[height=1.95cm,align=c]{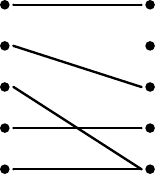} 
\end{mathpar}

It so happens that $(\mathbf{Fun},+,0)$ is cocartesian monoidal, so that $\sum_{i \in \underline{n}} k_i$ is a coproduct of $k_1,\ldots,k_n \in \mathbb{N}$. We will respectively write
\[
    \textstyle
    \ip_j^{(k_i)_{i \in \underline{n}}} : k_j \to ( \sum_{i \in \underline{n}} k_i ) 
        \quad\quad \text{and} \quad\quad
    \langle \mathbf{a}_1 \mid \ldots \mid \mathbf{a}_n \rangle : ( \sum_{i\in \underline{n}} k_i )  \to m
\]
for the coproduct injections and the copairing of the family $(\mathbf{a}_i : k_i \to m)_{i \in \underline{n}}$.

In particular, for each object $n$ of $\mathbf{Fun}$ we have $n = ( \sum_{i \in \underline{n}} 1 )$, and so any morphism $\mathbf{a} : m \to n$ can be written as a copairing
\[
    \mathbf{a} = \left\langle \ip_{\mathbf{a}(1)}^{(1)_{i\in \underline{n}}} \bigg| \cdots \bigg| \ip_{\mathbf{a}(m)}^{(1)_{i\in \underline{n}}} \right\rangle
\]
of coproduct injections. The fact that every morphism of $\mathbf{Fun}$ can be written in this way is helpful in defining an operation called the \emph{wreath product}. Specifically, for each $\mathbf{a} : m \to n$ in $\mathbf{Fun}$ and $k_1,\ldots,k_n \in \mathbb{N}$ we define their wreath product to be a morphism $\mathbf{a} \wr (k_1,\ldots,k_n) : ( \sum_{j \in \underline{m}} k_{\mathbf{a}(j)}) \to (\sum_{i \in \underline{n}} k_i)$ of $\mathbf{Fun}$ as in:
\[
  \mathbf{a} \wr (k_1,\ldots,k_n)
  = \left\langle \ip_{\mathbf{a}(1)}^{(1)_{i\in \underline{n}}} \bigg| \cdots \bigg| \ip_{\mathbf{a}(m)}^{(1)_{i\in \underline{n}}} \right\rangle \wr (k_1,\ldots,k_n)
  \coloneqq \left\langle \ip_{\mathbf{a}(1)}^{(k_i)_{i \in \underline{n}}} \bigg| \cdots \bigg| \ip_{\mathbf{a}(m)}^{(k_i)_{i \in \underline{n}}} \right\rangle
\]

The above definition is somewhat opaque. Fortunately, the effect of the wreath product is easily understood when we consider it in terms of dot and line diagrams. There the diagram representing $\mathbf{a} \wr (k_1,\ldots,k_n)$ is obtained from the diagram representing $\mathbf{a}$ by ``thickening'' it in amounts given by the $k_i$. First one thickens the codomain by replacing each dot $i \in \underline{n}$ with $k_i$ separate dots, so that the new codomain is $\sum_{i\in \underline{n}} k_i$, and thickens the domain by replacing each dot $j \in \underline{m}$ with $k_{\mathbf{a}(j)}$ separate dots, so that the new domain is $\sum_{j \in \underline{m}} k_{\mathbf{a}(j)}$. Finally one thickens the line leaving each dot $j \in \underline{m}$ into $k_{\mathbf{a}(j)}$ parallel lines, connecting the $k_{\mathbf{a}(j)}$ points that have replaced $j \in \underline{m}$ to the $k_{\mathbf{a}(j)}$ points that have replaced $\mathbf{a}(j) \in \underline{n}$. For example, if $\mathbf{a} : 4 \to 4$ is represented by the diagram below left, then $\mathbf{a} \wr (3,2,3,2) : 9 \to 10$ is represented by the diagram below right. 
\begin{mathpar} 
  \includegraphics[height=2cm,align=c]{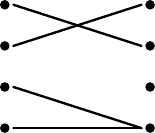} 

  \includegraphics[height=3cm,align=c]{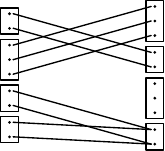} 
\end{mathpar}

We are now ready to give the central definition of this section:
\begin{definition}[\cite{Shulman2016}]
A \emph{faithful cartesian club} is a subcategory $\mathfrak{S}$ of $\mathbf{Fun}$ such that:
\begin{itemize}
\item $\mathfrak{S}$ contains all the objects of $\mathbf{Fun}$ (i.e., it is a \emph{wide} subcategory).
\item If morphisms $\mathbf{a}$ and $\mathbf{b}$ are in $\mathfrak{S}$ then so is $\mathbf{a}+\mathbf{b}$ (i.e., it is closed under $+$).
\item If $\mathbf{a}: m \rightarrow n$ is in $\mathfrak{S}$ then so is $\mathbf{a} \wr (k_1, \ldots, k_n)$ for all $k_1, \ldots, k_n$ (i.e., it is closed under $\wr$).
\end{itemize}
\end{definition}

\begin{figure}
\begin{center}
\renewcommand{\arraystretch}{1.2}
\setlength{\tabcolsep}{1em}
\begin{tabular}{llll}
\toprule
Club $\mathfrak{S}$ & Consists of & Generated by \\
\midrule
  \textbf{Id} & identities & -- \\
  \textbf{Bij} & bijections & $\tau$ \\
  \textbf{Minj} & monotone injections & $\delta$ \\
  \textbf{Inj} & injections &  $\tau, \delta$ \\
  \textbf{Srj} & surjections &  $\tau, \sigma$ \\
  \textbf{Fun} & functions &  $\tau, \sigma, \delta$ \\
\bottomrule
\end{tabular}
\end{center}
\caption{Some faithful Cartesian clubs and their relationship to the face, degeneracy, and transposition maps.}\label{fig:clubs}
\end{figure}

The faithful Cartesian clubs that will be relevant to us here are listed in Figure~\ref{fig:clubs}. These clubs can be understood in terms of certain special classes of morphism in $\mathbf{Fun}$. Specifically, we consider:
\begin{itemize}
\item \emph{transpositions} $\tau_i^n : n \to n$ for all $n > 1$ and $1 \leq i < n$ defined as in:
  \begin{mathpar}
    \tau_i^n(x) =
    \begin{cases}
      x+1 &\text{if } x = i \\
      x-1 &\text{if } x = i+1 \\
      x &\text{otherwise}
    \end{cases}

    \includegraphics[height=1.5cm,align=c]{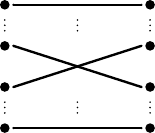} 
  \end{mathpar}
  so $\tau_i^n$ is the permutation that swaps the $i$th and $(i+1)$st elements.
\item \emph{degeneracy maps} $\sigma^n_i : n+1 \to n$ for all $n$ and $1 \leq i \leq n$ defined as in: 
  \begin{mathpar}
    \sigma^n_i(x) = 
    \begin{cases}
      x &\text{if } x \leq i \\
      x-1 &\text{if } x > i
    \end{cases}

    \includegraphics[height=1.5cm,align=c]{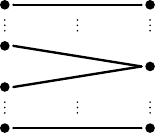} 
  \end{mathpar}
  so $\sigma^n_i$ is the monotone surjection that merges the $i$th and $(i+1)$st elements.
\item \emph{face maps} $\delta^n_i : n - 1 \to n$ for all $n \geq 1$ and $1 \leq i \leq n$, defined as in:
  \begin{mathpar}
    \delta^n_i(x)=
    \begin{cases}
      x &\text{if } x < i\\
      x + 1&\text{if }x \geq i
    \end{cases}

    \includegraphics[height=1.95cm,align=c]{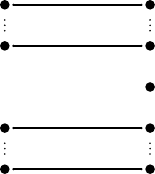} 
  \end{mathpar}
  so $\delta_i^n$ is the monotone injection that skips the $i$th element of the codomain.
\end{itemize}

Each of these clubs is generated by certain sorts of special morphism together with identity morphisms through composition. $\mathbf{Fun}$ itself is generated by all of the face, degeneracy, and transposition maps, $\mathbf{Bij}$ is generated by the transposition maps, and so on (see e.g.,~\cite{Grandis2001,Burroni1993}). The other characterisations are given in the third column of Figure~\ref{fig:clubs}.

Note that not all possible combinations of generators occur in Figure~\ref{fig:clubs}. In particular, the monotone surjections and monotone functions are not closed under the wreath product, and so do not form a faithful cartesian club. In particular, $\sigma_1^1 \wr (2)$ is not monotone.

\subsection{Multicategories and Structured Multicategories}\label{subsec:structured-multicategories}

We proceed to define multicategories, and to define $\mathfrak{S}$-multicategories for a faithful cartesian club $\mathfrak{S}$.  Multicategories are similar to categories, the primary difference being that while the domain of a morphism in a category consists of a single object, the domain of a morphism in a multicategory consists of a finite sequence of objects. Explicitly:

\begin{definition}[\cite{Leinster2004}]\label{def:multicategory}
A \emph{multicategory} $\mathcal{M}$ consists of the following data:
\begin{itemize}
  \item A set $\mathcal{M}_0$ whose elements are called the \emph{objects} of $\mathcal{M}$.
  \item For each $n \in \mathbb{N}$ and $A_1, \ldots, A_n, B \in \mathcal{M}_0$, 
        a set $\mathcal{M}(A_1, \ldots, A_n; B)$ of \emph{morphisms}. Any $f \in \mathcal{M}(A_1,\ldots,A_n;B)$ is said to \emph{have arity $n$}.
  \item For each $n \in \mathbb{N}$, $A_1,\ldots,A_n,B \in \mathcal{M}_0$,  and $\Gamma_1,\ldots,\Gamma_n \in \mathcal{M}_0^*$, a \emph{composition} operation:
        \[
          \mathcal{M}(A_1,\ldots,A_n;B) \times \mathcal{M}(\Gamma_1;A_1) \times \cdots \times \mathcal{M}(\Gamma_n;A_n)
          \stackrel{\circ}{\longrightarrow}
          \mathcal{M}(\Gamma_1,\ldots,\Gamma_n;B)
        \]
        which we will usually write infix as in $f \circ (g_1,\ldots,g_n) = \circ(f,g_1,\ldots,g_n)$.
  \item For each $A \in \mathcal{M}_0$, an \emph{identity} morphism $1_A \in \mathcal{M}(A; A)$.
      \end{itemize}
      This data must be such that:
      \begin{itemize}
  \item Composition is associative. That is, we have:
    \begin{align*}
      & f \circ  (g_1 \circ (h_1^1, \ldots, h_1^{k_1}), \ldots, 
      g_n \circ (h_n^1, \ldots, h_n^{k_n})) \\
      &=  (f \circ  (g_1, \ldots, g_n)) \circ (h_1^1, \ldots, h_1^{k_1}, \ldots, h_n^1, \ldots, h_n^{k_n}) 
    \end{align*}
    whenever $f,g_i,h_i^j$ are morphisms for which the composites make sense.
  \item Identity morphisms are unital. That is, we have:
    \[
      f \circ (1_{A_1}, \ldots, 1_{A_n}) = f = 1_B \circ f
    \]
    for every $f \in \mathcal{M}(A_1, \ldots, A_n;B)$.
\end{itemize}
\end{definition}
If $A$ is an object of a multicategory $\mathcal{M}$ we write $A^n$ to indicate the sequence $A,\ldots,A$ consisting of $n$ copies of $A$. Similarly, if $f \in \mathcal{M}(\Gamma;A)$ we write $f^n$ to indicate the sequence consisting of $n$ copies of $f$. For example, if $g \in \mathcal{M}(A^n;B)$ then we may write $g \circ (f^n)$ to indicate the composite $g \circ (f,\ldots,f)$. It is important to note that morphisms of a multicategory $\mathcal{M}$ may have arity $0$, in which case their domain is the empty sequence as in $\mathcal{M}(;B)$.

Now, structured multicategories are defined as in:
\begin{definition}[\cite{Shulman2016}]
Let $\mathfrak{S}$ be a faithful cartesian club. An \emph{$\mathfrak{S}$-multicategory} is a multicategory $\mathcal{M}$ together with an operation:
\[
  \mathcal{M}(A_{\mathbf{a}(1)}, \ldots, A_{\mathbf{a}(m)}; B) \xrightarrow{[-]\mathbf{a}} \mathcal{M}(A_1, \ldots, A_n;B) 
\]
for each $\mathbf{a} : m \to n$ in $\mathfrak{S}$, such that:
\begin{itemize}
    \item $[[f]\mathbf{a}]\mathbf{b} = [f](\mathbf{b} \circ \mathbf{a})$
    \item $[f]1_{\underline{n}} = f$
    \item $g \circ ([f_1]\mathbf{a}_1, \ldots, [f_n]\mathbf{a}_n) = [g \circ (f_1, \ldots, f_n)](\mathbf{a}_1 + \cdots + \mathbf{a}_n)$
    \item $[g]\mathbf{a} \circ (f_1, \ldots, f_n) = [g \circ (f_{\mathbf{a}(1)}, \ldots, f_{\mathbf{a}(m)})](\mathbf{a} \wr (k_1, \ldots, k_n))$ where each $k_i$ is the arity of $f_i$.
\end{itemize}
\end{definition}
  
A good way to understand morphisms in multicategories is through their string diagrams. For example a morphism $g$ of arity $4$ in some multicategory is pictured below left. Composites may be depicted by merging input and output wires. For example if in addition to $g$ we have morphisms $f$ and $h$ of arity $2$ and $3$ respectively, then $g \circ (f,h,f,f)$ is pictured below right.
\begin{mathpar}
  \includegraphics[height=1.7cm,align=c]{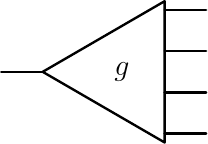} 

   \includegraphics[height=1.7cm,align=c]{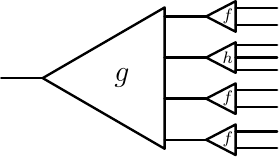} 
\end{mathpar}
In $\mathfrak{S}$-multicategories, it is convenient to depict the action of a given $\mathbf{a} \in \mathfrak{S}$ by juxtaposing the dot and line diagram for $\mathbf{a}$ with the string diagram depicting $f$. For example, if our morphism $g$ inhabits a $\mathbf{Fun}$-multicategory and $\mathbf{a} : 4 \to 4 \in \mathbf{Fun}$ is the function depicted below left, then the morphism $[g]\mathbf{a}$ is pictured below right.
\begin{mathpar}
  \includegraphics[height=1.5cm,align=c]{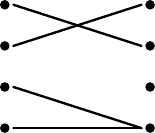} 

  \includegraphics[height=1.7cm,align=c]{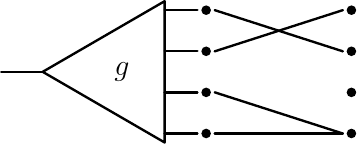} 
\end{mathpar}
It can be helpful to consider the axioms of an $\mathfrak{S}$-multicategory from this perspective. In particular, the axiom concerning whiskering tells us that:
\begin{align*}
  &[g]\mathbf{a} \circ (h,f,h,f)
  = [g \circ (f,h,f,f)](\mathbf{a}\wr (3,2,3,2))
\end{align*}
which is pictured as in:
\begin{mathpar}
  \includegraphics[height=1.8cm,align=c]{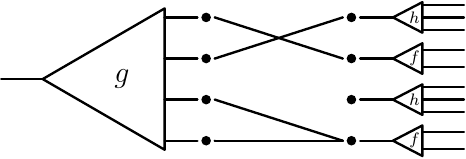} 
  \hspace{0.3cm}
  =
  \hspace{0.3cm}
  \includegraphics[height=1.8cm,align=c]{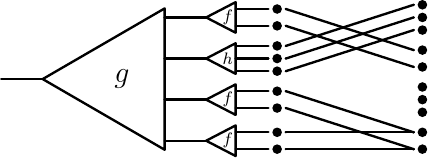} 
\end{mathpar}

While some of the notions of structured multicategory we consider are a little exotic, we note that $\mathbf{Id}$-multicategories are just multicategories, that a $\mathbf{Bij}$-multicategory is what is usually called a \emph{symmetric multicategory}, and that a $\mathbf{Fun}$-multicategory is what is usually called a \emph{cartesian multicategory}.

Before moving on, we record a few auxiliary definitions that will be required later on in our development. First, we will require a notion of subcategory for structured multicategories:
\begin{definition}
  Let $\mathcal{M}$ be a multicategory. A \emph{sub-multicategory} $\mathcal{N}$ of $\mathcal{M}$ consists of:
  \begin{itemize}
  \item A subcollection $\mathcal{N}_0 \subseteq \mathcal{M}_0$ of \emph{objects}.
  \item For each $A_1,\ldots,A_n,B \in \mathcal{N}_0$, a subcollection
    \[
      \mathcal{N}(A_1,\ldots,A_n;B) \subseteq \mathcal{M}(A_1,\ldots,A_n;B)
    \]
    of \emph{morphisms}.
  \end{itemize}
  such that:
  \begin{itemize}
  \item $\mathcal{N}$ has identity morphisms. That is, for each $A \in \mathcal{N}_0$ we have $1_A \in \mathcal{N}(A;A)$.
  \item $\mathcal{N}$ is closed under composition. That is, if $g \in \mathcal{N}(A_1,\ldots,A_n;B)$ and $(f_i \in  \mathcal{N}(\Gamma_i;A_i))_{i=1}^n$ then $g \circ (f_1,\ldots,f_n) \in \mathcal{N}(\Gamma_1,\ldots,\Gamma_n;B)$.
  \end{itemize}
  where the composition and identity morphisms above are those of $\mathcal{M}$. In case $\mathcal{M}$ is an $\mathfrak{S}$-multicategory for some faithful cartesian club $\mathfrak{S}$, we say that $\mathcal{N}$ is a \emph{sub-$\mathfrak{S}$-multicategory} of $\mathcal{M}$ in case:
  \begin{itemize}
  \item  $\mathcal{N}$ closed under the action of $\mathfrak{S}$. That is, if $\mathbf{a} : m \to n \in \mathfrak{S}$ and  $f \in \mathcal{N}(A_{\mathbf{a}(1)},\ldots,A_{\mathbf{a}(m)};B)$, then $[f]\mathbf{a} \in \mathcal{N}(A_1,\ldots,A_n;B)$.
  \end{itemize}
\end{definition}

Next, we define a functor of structured multicategories:
\begin{definition}
  Let $\mathcal{M}$ and $\mathcal{N}$ be multicategories. A \emph{multifunctor} $F : \mathcal{M} \to \mathcal{N}$ consists of:
  \begin{itemize}
  \item A mapping $F_0 : \mathcal{M}_0 \to \mathcal{N}_0$.
  \item A mapping 
    \[F : \mathcal{M}(A_1,\ldots,A_n;B) \to \mathcal{N}(FA_1,\ldots,FA_n;FB)\]
    for each $A_1,\ldots,A_n,B \in \mathcal{M}_0$.
  \end{itemize}
  which must preserve composition and identities as in:
  \begin{itemize}
  \item $F(f) \circ (F(g_1),\ldots,F(g_n)) = F(f \circ (g_1,\ldots,g_n))$
  \item $F(1_A) = 1_{FA}$
  \end{itemize}
  In case $\mathcal{M}$ and $\mathcal{N}$ are $\mathfrak{S}$-multicategories for some faithful cartesian club $\mathfrak{S}$, we call $F$ an \emph{$\mathfrak{S}$-multifunctor} in case it preserves the action of $\mathfrak{S}$ as in:
  \begin{itemize}
  \item $F([f]\mathbf{a}) = [F(f)]\mathbf{a}$
  \end{itemize}
\end{definition}

\subsection{Sequent Calculus}\label{subsec:sequent-calculus}

Structured multicategories enjoy a deep connection to intuitionistic sequent calculus. Let $\Sigma$ be a (multi-sorted) signature in which operation symbols are typed as in $f : A_1,\ldots,A_n \vdash B$ where $A_1,\ldots,A_n,B$ are generating sorts. Then a \emph{term} over $\Sigma$ is a sequent that is derivable via the following inference rules:
\begin{mathpar}
  \inferrule*[right=var]{\text{ }}{x:A \vdash x:A}

  \inferrule*[right=op]{(\Gamma_i \vdash t_i : A_i)_{i \in \{1,\ldots,n\}} \\ (f : A_1,\ldots,A_n \vdash B) \in \Sigma}{\Gamma_1,\ldots,\Gamma_n \vdash f(t_1,\ldots,t_n) : B}
\end{mathpar}

Crucially, for a sequent $\Gamma \vdash t : B$ to be considered well-formed, the context $\Gamma$ must not contain any repeated variables. This means that, for example, when we write $\Gamma_1,\ldots,\Gamma_n$, it is implied that the variables in the $\Gamma_i$ are disjoint.

Terms over a signature form a multicategory, with identity morphisms given by the \textsc{var} rule and with composition given by substitution. More precisely, the composition operation is given by the following admissible inference rule:
\begin{mathpar}
  \inferrule*[right=comp]{(\Gamma_i \vdash t_i : A_i)_{i \in \{1,\ldots,n\}} \\ x_1 : A_1,\ldots,x_n : A_n \vdash t : B}{\Gamma_1,\ldots,\Gamma_n \vdash t[t_1,\ldots,t_n/x_1,\ldots,x_n] : B}
\end{mathpar}
That this satisfies the equations of a multicategory follows from certain elementary properties of substitution. For example, for any $\Gamma \vdash t : B$ the right-unitality law ($1_B \circ f = f$) holds as in:
\[
  (\Gamma \vdash x[t/x] : B) = (\Gamma \vdash t : B)
\]
We omit the redundant information when writing such equations, so that for example $\Gamma \vdash x[t/x] = t : B$ is an equivalent way of expressing the above equation.

The multicategory of terms over a signature is in fact the \emph{free} multicategory over that signature, in the sense that this construction gives the left adjoint of an adjunction between a category of signatures and the category of multicategories. The right adjoint maps a multicategory $\mathcal{M}$ to the $\mathcal{M}_0$-sorted signature with an operation symbol $f : A_1,\ldots,A_n \vdash B$ for each $f \in \mathcal{M}(A_1,\ldots,A_n;B)$. The counit of the adjunction gives a morphism from the multicategory of terms over this signature into $\mathcal{M}$, and quotienting the terms by the equations that hold in the image of this morphism yields a sequent calculus presentation of $\mathcal{M}$. It follows that we may reason about morphisms in any multicategory by means of sequent calculus, interpreting composition as substitution and identities as variables.

To extend this to structured multicategories requires an additional structural inference rule. For a given faithful cartesian club $\mathfrak{S}$ we ask that:
\begin{mathpar}
  \inferrule*[right=act]{\mathbf{a} : m \to n \in \mathfrak{S} \\ x_1 : A_{\mathbf{a}(1)},\ldots,x_m : A_{\mathbf{a}(m)} \vdash t : B}{x_1 : A_1,\ldots,x_n : A_n \vdash [t]\mathbf{a} : B}
\end{mathpar}
where the term $[t]\mathbf{a}$ is defined as in:
\begin{mathpar}
  \text{}
  [t]\mathbf{a} =
  \begin{cases}
    x_{\mathbf{a}(i)} & \text{if } t = x_i \text{ is a variable}\\
    f([t_1]\mathbf{a},\ldots,[t_n]\mathbf{a})  & \text{if } t = f(t_1,\ldots,t_n)
  \end{cases}
\end{mathpar}
Note that $[t]\mathbf{a}$ is a meta-level operation, just like substitution. Now the derivable sequents form the free $\mathfrak{S}$-multicategory, and as before one obtains a sequent calculus presentation for arbitrary $\mathfrak{S}$-multicategories.

The effect of the $\textsc{act}$ rule is to allow the variables of the context to be used more flexibly in terms, with the degree of flexibility depending on $\mathfrak{S}$. In the sequent calculus for multicategories, derivable sequents $\Gamma \vdash t : B$ have the property that the variables of the context $\Gamma$ occur exactly once in $t$, in exactly the same order they appear in $\Gamma$. For example, we can derive $x_1 : A_1, x_2 : A_2 \vdash f(x_1,x_2) : B$ but not $x_1 : A_1, x_2 : A_2 \vdash f(x_2,x_1) : B $ or $x_1 : A_1 \vdash f(x_1,x_1) : B$. Similarly, while we can derive $x_1 : A_1 \vdash g(x_1) : B$, we cannot derive $x_1 : A_1, x_2 : A_2 \vdash g(x_1) : B$. In a $\mathbf{Fun}$-multicategory all of these are possible as in:
\begin{mathpar}
  \inferrule{\tau_1^2 : 2 \to 2 \in \mathbf{Fun} \\ x_1:A_1,x_2:A_2 \vdash f(x_1,x_2) : B}{x_1 : A_1,x_2 : A_2 \vdash [f(x_1,x_2)]\tau_1^2 = f(x_2,x_1) : B} 

  \inferrule{\sigma_1^1 : 2 \to 1 \in \mathbf{Fun} \\ x_1:A_1 \vdash f(x_1,x_2) : B}{x_1:A_1 \vdash [f(x_1,x_2)]\sigma_1^1 = f(x_1,x_1) : B}

  \inferrule{\delta_1^2 : 1 \to 2 \in \mathbf{Fun} \\ x_1:A_1 \vdash g(x_1) : B}{x_1:A_1, x_2:A_2 \vdash [g(x_1)]\delta_1^2 = g(x_1) : B}
\end{mathpar}

This is a good way to think about the different notions of polynomial obtained from the different faithful cartesian clubs. Different sorts of combinatory completeness correspond to different sorts of restriction on the use of variables in terms. Formally, terms represent morphisms in some $\mathfrak{S}$-multicategory, and the choice of $\mathfrak{S}$ determines how variables may be used.

\section{Weak Combinatory Completeness and Combinators}\label{sec:results}
In this section we reformulate the basic definitions surrounding applicative systems and combinatory completeness in the setting of a (possibly structured) multicategory, give a general notion of weak $\mathfrak{S}$-combinatory completeness with respect to a faithful cartesian club $\mathfrak{S}$, and use it to characterise the existence of certain collections of combinators in a given applicative system, as summarised in Figure~\ref{fig:results-summary}. We begin with the notion of applicative system:
\begin{definition}
  Let $\mathcal{M}$ be a multicategory. An \emph{applicative system} $(A,\application)$ in $\mathcal{M}$ consists of an object $A \in \mathcal{M}_0$ together with a morphism $\application \in \mathcal{M}(A,A;A)$, called \emph{application}.
\end{definition}
When working with applicative systems in multicategories, we will tend to favour the sequent calculus syntax discussed in Section~\ref{sec:structured-multicategories}, and when doing so will adopt the usual syntactic conventions for working with application discussed in Section~\ref{sec:intro}, so that for example $xyz = (xy)z = (x \application y) \application z$. Moreover, when we are working with an applicative system $(A,\application)$ in a multicategory $\mathcal{M}$, all of the relevant morphisms are elements of $\mathcal{M}(A^n;A)$ for some $n \in \mathbb{N}$. This allows us to omit the types from our sequents, since everything has type $A$. For example we may write $x_1,x_2 \vdash f(x_1,x_2)$ instead of $x_1 : A , x_2 : A \vdash f(x_1,x_2) : A$, which helps to make things less cluttered. We will moreover allow ourselves to use variable names beyond $x_i$. While this can be made fully formal, we refrain from doing so here. The interested reader may consult the appendix of~\cite{Shulman2016}, which follows the ``nominal'' approach of Gabbay and Pitts~\cite{Gabbay1999}.

If $(A,\application)$ is an applicative system in $\mathcal{M}$ then we define an \emph{iterated application} operation $\application^n \in \mathcal{M}(A,A^n;A)$ for each $n \in \mathbb{N}$ as in $\application^0 = 1_A$ and $\application^{n+1} = \application \circ (\application^n,1_A)$. Note that $\bullet^1 = \bullet \circ (1_A,1_A) = \bullet$, and that in the sequent calculus notation $\application^n$ becomes $x,x_1,\ldots,x_n \vdash xx_1\cdots x_n$. This facilitates the following definition:
\begin{definition}
  Let $\mathcal{M}$ be a multicategory and let $(A,\application)$ be an applicative system in $\mathcal{M}$. We say that $f \in \mathcal{M}(A^n;A)$ is \emph{$(A,\application)$-computable} in case there exists some $a \in \mathcal{M}(;A)$ in $\mathcal{M}$ such that $\application^n \circ (a,1_A,\ldots,1_A) = f$ or, equivalently, such that $x_1,\ldots,x_n \vdash ax_1\cdots x_n = f(x_1,\ldots,x_n)$.
\end{definition}
The morphisms $a \in \mathcal{M}(;A)$ play the role of elements of $A$, and are all $(A,\bullet)$-computable as in $\bullet^0 \circ (a) = 1_A \circ (a) = a$.

Next, every faithful cartesian club gives a notion of regular polynomial as follows:
\begin{definition}\label{def:regular-s-polynomial}
  Let $\mathfrak{S}$ be a faithful cartesian club, and let $(A,\application)$ be an applicative system in an $\mathfrak{S}$-multicategory $\mathcal{M}$. Let $\mathfrak{S}_\mathsf{r}(A,\bullet)$ be the smallest sub-$\mathfrak{S}$-multicategory of $\mathcal{M}$ containing $\application \in \mathcal{M}(A,A;A)$. We refer to morphisms of $\mathfrak{S}_\mathsf{r}(A,\bullet)$ as \emph{regular $\mathfrak{S}$-polynomials over $(A,\bullet)$}.
\end{definition}

For example, $x_1,x_2,x_3 \vdash x_1(x_2x_3)$ is a regular $\mathfrak{S}$-polynomial over $(A,\bullet)$. Adapting the classical notion of combinatory completeness, we obtain:
\begin{definition}\label{def:weak-s-combinatory-complete}
  Let $\mathfrak{S}$ be a faithful cartesian club, $\mathcal{M}$ be an $\mathfrak{S}$-multicategory, and $(A,\application)$ be an applicative system in $\mathcal{M}$. We say that $(A,\bullet)$ is \emph{weakly $\mathfrak{S}$-combinatory complete} in case every morphism of $\mathfrak{S}_\mathsf{r}(A,\bullet)$ is $(A,\bullet)$-computable.
\end{definition}

We proceed to establish our results, beginning with the simplest. We define:
\begin{definition}
  Let $\mathcal{M}$ be a multicategory (i.e., an $\mathbf{Id}$-multicategory), and let $(A,\application)$ be an applicative system in $\mathcal{M}$.
  \begin{itemize}
  \item A \emph{$\mathsf{B}$ combinator} for $(A,\application)$ is $\mathsf{B} \in \mathcal{M}(;A)$ such that $\application^3 \circ (\mathsf{B},1_A,1_A,1_A) = \application \circ (1_A,\application)$, or equivalently $x_1,x_2,x_3 \vdash \mathsf{B}x_1x_2x_3 = x_1(x_2x_3)$.
  \item An \emph{$\mathsf{I}$ combinator} for $(A,\application)$ is a morphism $\mathsf{I} \in \mathcal{M}(;A)$ such that $\application \circ (\mathsf{I},1_A) = 1_A$, or equivalently $x \vdash \mathsf{I}x = x$.
  \end{itemize}
  If $(A,\application)$ has both a $\mathsf{B}$ and $\mathsf{I}$ combinator, we say that it is a \emph{$\mathsf{BI}$-algebra}.
\end{definition}

\begin{lemma}\label{lem:iterated-b}
  Let $(A,\application)$ be a $\mathsf{BI}$-algebra in a multicategory $\mathcal{M}$. We define a morphism $\mathsf{B}^n \in \mathcal{M}(;A)$ for each $n \in \mathbb{N}$ as follows: $\mathsf{B}^0 = \mathsf{I}$, $\mathsf{B}^1 = \mathsf{B}$, and $\mathsf{B}^{n+1} = \mathsf{BB}(\mathsf{B}^n)$ for $n \ge 1$. Then for all $n \in \mathbb{N}$ we have  $\application^{n+1} \circ (\mathsf{B}^n,1_A,1_A,1_A^n) = \application \circ (1_A,\application^n)$, or equivalently, $b,a,x_1,\ldots,x_n \vdash \mathsf{B}^nbax_n\cdots x_1 = b(a x_n\cdots x_1)$. 
\end{lemma}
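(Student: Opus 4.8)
The plan is to prove the equation by induction on $n$, working throughout in the sequent-calculus syntax for $\mathcal{M}$ justified in Section~\ref{sec:structured-multicategories}. There the defining equations of the $\mathsf{B}$ and $\mathsf{I}$ combinators behave as rewriting rules and substitution is composition; since equality of derivable sequents is a congruence for substitution, I may freely specialise a combinator equation by plugging terms in for its variables, and it suffices to verify the sequent form $b,a,x_1,\ldots,x_n \vdash \mathsf{B}^nbax_n\cdots x_1 = b(ax_n\cdots x_1)$, this being equivalent to the stated morphism equation. The key preliminary step is a ``peeling'' identity: for each $n \geq 1$ I would first establish $b,a,w \vdash \mathsf{B}^{n+1}baw = \mathsf{B}^nb(aw)$. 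This follows purely from the definition $\mathsf{B}^{n+1} = \mathsf{BB}(\mathsf{B}^n)$ together with two applications of the rule $\mathsf{B}uvw = u(vw)$: reading $\mathsf{BB}\mathsf{B}^n$ as the combinator $\mathsf{B}$ applied to $\mathsf{B}$, $\mathsf{B}^n$, and then $b$, a first reduction gives $\mathsf{B}\mathsf{B}\mathsf{B}^nb = \mathsf{B}(\mathsf{B}^nb)$, and a second application to the arguments $\mathsf{B}^nb$, $a$, $w$ yields $\mathsf{B}(\mathsf{B}^nb)aw = (\mathsf{B}^nb)(aw) = \mathsf{B}^nb(aw)$.

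For the base case $n = 0$ I would use the $\mathsf{I}$ rule: $\mathsf{B}^0ba = \mathsf{I}ba = ba = b(a)$, where the right-hand side $b(ax_n\cdots x_1)$ degenerates to $b(a)$ in the absence of any $x_i$. For the inductive step, assuming the claim at level $n$ (that $\mathsf{B}^nb'a'x_n\cdots x_1 = b'(a'x_n\cdots x_1)$ for all arguments), I would apply the peeling identity with $w = x_{n+1}$ and then feed the remaining arguments $x_n,\ldots,x_1$, obtaining $\mathsf{B}^{n+1}bax_{n+1}x_n\cdots x_1 = \mathsf{B}^nb(ax_{n+1})x_n\cdots x_1$. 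Instantiating the induction hypothesis at $a' = ax_{n+1}$ (a legitimate substitution by the congruence remark) rewrites the right-hand side to $b((ax_{n+1})x_n\cdots x_1)$, which equals $b(ax_{n+1}x_n\cdots x_1)$ by left-associativity of application; this is exactly the target at level $n+1$. The step is valid for $n \geq 1$, so together with the base case it covers all $n$ (the case $n = 1$ being in any event immediate from the $\mathsf{B}$ rule).

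The only genuinely fiddly point, and hence the main obstacle, is the peeling identity itself: one must correctly parse $\mathsf{BB}(\mathsf{B}^n)$ as a left-associated application and apply the $\mathsf{B}$ rule in the right order so that the leading argument $x_{n+1}$ is absorbed into the accumulator position $a$ rather than lost or misplaced. Once this identity is in hand the induction is purely formal. The remaining delicate bookkeeping -- the reversed variable order $x_n\cdots x_1$ and the instantiation of the hypothesis at the composite term $ax_{n+1}$ -- requires attention but poses no real difficulty, since each manipulation is a valid instance of multicategory composition.
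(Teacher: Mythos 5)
Your proposal is correct and follows essentially the same route as the paper: induction on $n$, with the base cases $n=0,1$ handled by the $\mathsf{I}$ and $\mathsf{B}$ rules, and the inductive step reducing $\mathsf{BB}(\mathsf{B}^n)bax_{n+1}\cdots$ via two applications of the $\mathsf{B}$ rule before invoking the hypothesis at $a' = ax_{n+1}$. The only difference is presentational --- you isolate the two $\mathsf{B}$-reductions as a named ``peeling'' identity, whereas the paper performs them inline within the inductive step.
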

\begin{proof}
  By induction on $n \in \mathbb{N}$. The base cases are when $n = 0$, in which case we have $b,a \vdash \mathsf{B}^0ba = \mathsf{I}ba = ba$, and when $n = 1$, in which case we have $b,a,x_1 \vdash \mathsf{B}^1bax_1 = \mathsf{B}bax_1 = b(ax_1)$. For the inductive case, suppose that we have $b,a,x_n,\ldots,x_1 \vdash \mathsf{B}^nbax_n \cdots x_1 = b(ax_n\cdots x_1)$. Then we also have:
  \begin{align*}
    &b,a,x_{n+1},x_n,\ldots,x_1
    \vdash \mathsf{B}^{n+1}b a x_{n+1} x_n \cdots x_1
      = \mathsf{BB}(\mathsf{B}^n)b a x_{n+1} x_n \cdots x_1
      \\&= \mathsf{B}(\mathsf{B}^nb) a x_{n+1} x_n \cdots x_1
      = (\mathsf{B}^n b)(a x_{n+1}) x_n \cdots x_1
      = \mathsf{B}^n b(a x_{n+1}) x_n \cdots x_1
      \\&= b((a x_{n+1}) x_n \cdots x_1)
      = b(a x_{n+1} x_n \cdots x_1)
  \end{align*}
  The claim follows by induction.
\end{proof}

Now our combinatory completeness result for $\mathsf{BI}$-algebras is as follows:
\begin{theorem}\label{thm:id-combinatory-complete}
  Let $(A,\application)$ be an applicative system in a multicategory $\mathcal{M}$. Then $(A,\bullet)$ is weakly $\mathbf{Id}$-combinatory complete if and only if it is a $\mathsf{BI}$-algebra.
\end{theorem}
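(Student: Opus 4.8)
The plan is to prove the two implications separately, and I expect the reverse one (combinatory completeness $\Rightarrow$ $\mathsf{BI}$) to be routine and the forward one to carry all the weight. First I would record a concrete description of the polynomials. Since an $\mathbf{Id}$-multicategory is just a multicategory and $\mathbf{Id}(A,\application)$ is the smallest sub-multicategory containing $\application$, its morphisms in $\mathcal{M}(A^n;A)$ are exactly the terms $x_1,\dots,x_n \vdash t$ built from $\application$ in which each variable occurs exactly once and in order; equivalently, a polynomial is a fully bracketed product of $x_1,\dots,x_n$ in that order, i.e. a binary tree with leaves $x_1,\dots,x_n$ read left to right.

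For the ``only if'' direction, assume $(A,\application)$ is $\mathbf{Id}$-combinatory complete. The identity $1_A$, namely the polynomial $x_1 \vdash x_1$, and the composite $\application \circ (1_A,\application)$, namely the polynomial $x_1,x_2,x_3 \vdash x_1(x_2x_3)$, both lie in $\mathbf{Id}(A,\application)$ and hence are $(A,\application)$-computable. A witness $a$ for the first satisfies $ax = x$ and is thus an $\mathsf{I}$ combinator, while a witness for the second satisfies $ax_1x_2x_3 = x_1(x_2x_3)$ and is thus a $\mathsf{B}$ combinator; so $(A,\application)$ is a $\mathsf{BI}$-algebra.

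For the ``if'' direction I would show that every polynomial is computable by induction on the number $n$ of variables, at each step stripping off the rightmost variable $x_n$. The key manoeuvre is to re-associate $t$ to the right using the $\mathsf{B}$ combinator: reading $x(yz) = \mathsf{B}xyz$ from right to left along the right spine descending to $x_n$ lets me rewrite $t$, up to the equations of the algebra, in the form $t' \cdot x_n$ with $x_n$ the top-level right argument (and when $t$ is the bare leaf $x_n$ I instead use $x_n = \mathsf{I}x_n$). Any combinator computing $t'$ then also computes $t$, since feeding it the extra argument re-applies it: $ax_1\cdots x_{n-1}x_n = (ax_1\cdots x_{n-1})x_n$. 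The re-association introduces the constant $\mathsf{B}$ only in leftmost (head) positions, so $t'$ is again an ordered product of $x_1,\dots,x_{n-1}$, now possibly interspersed with occurrences of the closed combinator $\mathsf{B}$ but still having $x_{n-1}$ rightmost. I would therefore strengthen the induction hypothesis to allow closed combinators as extra leaves, subject to the rightmost leaf being the rightmost surviving variable; the base case $n=0$ is a closed term, which serves as its own combinator. The iterated combinators $\mathsf{B}^n$ of Lemma~\ref{lem:iterated-b} are exactly what is needed to carry out (and batch) this re-association, since they realise clean post-composition $b(a\,z_1\cdots z_n)$ under a single closed head $\mathsf{B}^n b a$.

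The hard part is this forward direction, and specifically checking that the re-association never forces a combinator constant to move rightwards past a variable, which would require a $\mathsf{C}$ combinator unavailable in a bare $\mathsf{BI}$-algebra. The reason it does not is the ``planarity'' of stripping the last variable: because $x_n$ is the rightmost leaf it always lies inside the right argument of each application above it on the spine, so every use of $\mathsf{B}$ pushes structure to the left and the freshly created $\mathsf{B}$'s accumulate harmlessly at the head. Making this invariant precise — that the accumulated constants stay strictly to the left of the surviving variables, so that the clean equation $ax_1\cdots x_{n-1}x_n = (ax_1\cdots x_{n-1})x_n$ is applicable at every stage — is the crux. Granting it, the induction produces for each polynomial $t$ a closed $a$ with $ax_1\cdots x_n = t$, which is combinatory completeness.
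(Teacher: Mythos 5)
Your proposal is correct, and its hard direction takes a genuinely different route from the paper's. Both proofs open the same way: read off $\mathsf{B}$ and $\mathsf{I}$ from the computability of $\application \circ (1_A,\application)$ and $1_A$, and identify the morphisms of $\mathbf{Id}(A,\application)$ with binary bracketings of $x_1,\ldots,x_n$. For the converse the paper runs the induction \emph{upward} on the length of the bracketing: every bracketing of length $n+1$ is $i \triangleright \mathcal{B}'$ (the $i$th leaf of a shorter bracketing expanded into a pair), and if $a$ computes $\mathcal{B}'$ then $\mathsf{B}^{i-1}\mathsf{B}a$ computes $i \triangleright \mathcal{B}'$ by Lemma~\ref{lem:iterated-b}; every term in that induction remains a pure bracketing, so no bookkeeping about constants is needed. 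You instead run \emph{downward}, peeling the rightmost variable after re-associating along the right spine with $x(yz) = \mathsf{B}xyz$ --- essentially planar bracket abstraction of the last variable --- which forces the strengthened hypothesis admitting constant leaves and the invariant that the constants stay left of the variables. That invariant does hold, and in a stronger form than you state: each re-association inserts its fresh $\mathsf{B}$ at the extreme head, so at every stage the leaf string is a block of closed constants followed by $x_1,\ldots,x_m$ in order (not merely ``interspersed''), whence the leaf immediately preceding the rightmost variable is again a variable and the peeling continues; so your crux goes through. Two small remarks: your re-association needs only the plain $\mathsf{B}$ axiom applied repeatedly (each instance is a legitimate substitution in a bare multicategory because the variable blocks are contiguous and in order), so Lemma~\ref{lem:iterated-b} is a convenience for you where it is load-bearing for the paper; and the base case is better taken at one variable (a term with rightmost leaf $x_1$ re-associates to $cx_1$ with $c$ closed, and $c$ is the witness), since the paper deliberately does not count nullary morphisms as computable. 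Your route buys an explicit explanation of why no $\mathsf{C}$ combinator is needed; the paper's buys a shorter induction with no invariant to maintain.
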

\begin{proof}
  Suppose $(A,\application)$ is weakly $\mathbf{Id}$-combinatory complete. Then $\application \circ (1_A,\application)$ and $1_A$ are $(A,\application)$-computable, since they are both morphisms of $\mathbf{Id}_\mathsf{r}(A,\application)$. That is, there exist $\mathsf{B} \in \mathcal{M}(;A)$ and $\mathsf{I} \in \mathcal{M}(;A)$ such that $\application^3 \circ (B,1_A,1_A,1_A) = \application \circ (1_A,\application)$ and $\application \circ (\mathsf{I},1_A) = 1_A$. It follows that $(A,\application)$ is a $\mathsf{BI}$-algebra.

  The converse is somewhat more involved. First, define a \emph{binary bracketing} (see e.g.,~\cite{Stanley1997}) to be some $\mathcal{B}$ generated by the following grammar:
  \[
    \mathcal{B} ::= \square \mid (\mathcal{B} \mathcal{B})
  \]
  and define the \emph{length} of a binary bracketing to be the number of occurrences of $\square$ it contains, so that for example $(\square(\square \square))$ and $((\square \square)(\square\square))$ are binary bracketings of length $3$ and $4$, respectively.

  Given a binary bracketing $\mathcal{B}$ of length $n$ and a family of morphisms $(\Gamma_i \vdash t_i)_{i \in \{1,\ldots,n\}}$ (where everything is of type $A$), we obtain a morphism $\Gamma_1,\ldots,\Gamma_n \vdash \mathcal{B}(t_1,\ldots,t_n)$, where the term $\mathcal{B}(t_1,\ldots,t_n)$ is obtained by substituting $t_i$ for the $i$th occurrence of $\square$ in $\mathcal{B}$, ordered from left to right. For example, $\square(t) = t$, $(\square(\square \square))(t_1,t_2,t_3) = (t_1(t_2t_3))$ and $((\square \square)(\square \square))(t_1,t_2,t_3,t_4) = ((t_1t_2)(t_3t_4))$.

  Notice that the morphisms of $\mathbf{Id}_\mathsf{r}(A,\application)$ are precisely the morphisms of $\mathcal{M}$ of the form $x_1,\ldots,x_n \vdash \mathcal{B}(x_1,\ldots,x_n)$ where $\mathcal{B}$ is a binary bracketing of length $n$. This means that in order to establish the converse it suffices to show that whenever $(A,\application)$ is a $\mathsf{BI}$-algebra, every such morphism is $(A,\application)$-computable.

  Given a binary bracketing $\mathcal{B}$ of length $n$, for each $1 \leq i \leq n$ we obtain a new binary bracketing $i \triangleright \mathcal{B}$ of length $n+1$ by replacing the $i$th instance of $\square$ in $\mathcal{B}$ by $(\square \square)$. For example, we have $1 \triangleright (\square(\square \square)) = ((\square\square)(\square \square))$, $2 \triangleright (\square(\square \square)) = (\square((\square \square) \square))$, and $3 \triangleright (\square(\square \square)) = (\square(\square(\square \square)))$. Observe that for every binary bracketing $\mathcal{B}$ of length $n+1$ there exists a binary bracketing $\mathcal{B}'$ of length $n$ and $1 \leq i \leq n$ such that $\mathcal{B} = i \triangleright \mathcal{B}'$.

  We proceed to show that if $(A,\application)$ is a $\mathsf{BI}$-algebra then for every binary bracketing $\mathcal{B}$ of length $n$ the morphism $x_1,\ldots,x_n \vdash \mathcal{B}(x_1,\ldots,x_n)$ is $(A,\application)$-computable. We do this by induction on $n$. The base case is when $n = 1$, in which case $\mathcal{B}$ is $\square$, which is $(A,\application)$-computable as in $x \vdash \square(x) = x = \mathsf{I}x$. For the inductive case, suppose that the claim holds for all binary bracketings of length at most $n$, and let $\mathcal{B}$ be a binary bracketing of length $n+1$. Then for some binary bracketing $\mathcal{B}'$ of length $n$ and $1 \leq i \leq n$ we have $\mathcal{B} = i \triangleright \mathcal{B}'$. Moreover, our inductive hypothesis gives that $x_1,\ldots,x_n \vdash \mathcal{B}'(x_1,\ldots,x_n)$ is $(A,\application)$-computable, which is to say that for some $a \in \mathcal{M}(;A)$ we have $x_1,\ldots,x_n \vdash ax_1\cdots x_n = \mathcal{B}'(x_1,\ldots,x_n)$. Now Lemma~\ref{lem:iterated-b} gives that the morphism corresponding to $\mathcal{B}$ is $(A,\application)$-computable:
  \begin{align*}
    & x_1,\ldots,x_{i-1},y_1,y_2,x_{i+1},\ldots,x_n \vdash
      \mathsf{B}^{i-1}\mathsf{B}ax_1\cdots x_{i-1}y_1y_2x_{i+1} \cdots x_n
    \\&= \mathsf{B}(ax_1\cdots x_{i-1})y_1y_2 x_{i+1}\cdots x_n
    = ax_1\cdots x_{i-1} (y_1y_2) x_{i+1}\cdots x_n
    \\&= \mathcal{B}'(x_1,\ldots,x_{i-1},(y_1y_2),x_{i+1},\ldots,x_n)
    \\&= (i \triangleright \mathcal{B}')(x_1,\ldots,x_{i-1},y_1,y_2,x_{i+1},\ldots,x_n)
    \\&= \mathcal{B}(x_1,\ldots,x_{i-1},y_1,y_2,x_{i+1},\ldots,x_n)
  \end{align*}
  The claim follows.
\end{proof}

We proceed to consider the other combinators:
\begin{definition}
  Let $\mathfrak{S}$ be a faithful cartesian club, $\mathcal{M}$ be an $\mathfrak{S}$-multicategory, and $(A,\bullet)$ be an applicative system in $\mathcal{M}$.
  \begin{itemize}
  \item If $\mathfrak{S}$ contains the transpositions, we define a \emph{$\mathsf{C}$ combinator} for $(A,\bullet)$ to be a morphism $\mathsf{C} \in \mathcal{M}(;A)$ such that $\application^3 \circ (\mathsf{C},1_A,1_A,1_A) = [\application^2]\tau_2^3$, or in the sequent calculus notation $x_1,x_2,x_3 \vdash \mathsf{C}x_1x_2x_3 = x_1x_3x_2$.
  \item If $\mathfrak{S}$ contains the degeneracy maps, we define a \emph{$\mathsf{W}$ combinator} for $(A,\bullet)$ to be a morphism $\mathsf{W} \in \mathcal{M}(;A)$ such that $\bullet^2 \circ (\mathsf{W},1_A,1_A) = [\bullet^3]\sigma_2^2$, or in the sequent calculus notation $x_1,x_2 \vdash \mathsf{W}x_1x_2 = x_1x_2x_2$.
  \item If $\mathfrak{S}$ contains the face maps, we define a \emph{$\mathsf{K}$ combinator} for $(A,\bullet)$ to be a morphism $\mathsf{K} \in \mathcal{M}(;A)$ such that $\application^2 \circ (\mathsf{K},1_A,1_A) = [1_A]\delta_2^2$, or in the sequent calculus notation $x_1,x_2 \vdash \mathsf{K}x_1x_2 = x_1$.
  \end{itemize}
\end{definition}
As in the classical case, we say that an applicative system is, for example, a $\mathsf{BCI}$-algebra in case it has a $\mathsf{B}$, $\mathsf{C}$, and $\mathsf{I}$ combinator. We prove a technical lemma:
\begin{lemma}\label{lem:other-combinators}
  Let $\mathfrak{S}$ be a faithful cartesian club, $\mathcal{M}$ be an $\mathfrak{S}$-multicategory, and $(A,\bullet)$ be $\mathsf{BI}$-algebra in $\mathcal{M}$. Then we have:
  \begin{enumerate}
  \item If $\mathfrak{S}$ contains the transpositions and $(A,\bullet)$ has a $\mathsf{C}$ combinator, then whenever $f \in \mathcal{M}(A^n;A)$ is $(A,\bullet)$-computable so is $[f]\tau_i^n$ for $1 \leq i < n$. 
  \item If $\mathfrak{S}$ contains the degeneracy maps and $(A,\bullet)$ has a $\mathsf{W}$ combinator, then whenever $f \in \mathcal{M}(A^{n+1};A)$ is $(A,\bullet)$-computable so is $[f]\sigma_i^n$ for $1 \leq i \leq n$.
  \item If $\mathfrak{S}$ contains the face maps and $(A,\bullet)$ has a $\mathsf{K}$ combinator, then whenever $f \in \mathcal{M}(A^{n-1};A)$ is $(A,\bullet)$-computable so is $[f]\delta_i^n$ for $1 \leq i \leq n$.
  \end{enumerate}
\end{lemma}
  \begin{proof}
    \begin{enumerate}
    \item  Since $f \in \mathcal{M}(A^n;A)$ is $(A,\bullet)$-computable we have $a \in \mathcal{M}(;A)$ such that:
      \begin{align*}
        x_1,\ldots,x_i,x_{i+1},\ldots,x_n \vdash ax_1\cdots x_ix_{i+1}\cdots x_n = f(x_1,\ldots,x_i,x_{i+1},\ldots,x_n)
      \end{align*}
      Then using Lemma~\ref{lem:iterated-b} we have:
      \begin{align*}
        & x_1,\ldots,x_i,x_{i+1},\ldots,x_n
          \vdash \mathsf{B}^{i-1}\mathsf{C}ax_1 \cdots x_i x_{i+1} \cdots x_n
        \\&= \mathsf{C}(ax_1 \cdots) x_i x_{i+1} \cdots x_n
        = ax_1 \cdots x_{i+1}x_i \cdots x_n
        \\&= f(x_1,\ldots,x_{i+1},x_i,\ldots,x_n)
        = [f]\tau^n_i(x_1,\ldots,x_i,x_{i+1},\ldots,x_n)
      \end{align*}
      and the claim follows.
    \item Since $f \in \mathcal{M}(A^{n+1};A)$ is $(A,\bullet)$-computable we have $a \in \mathcal{M}(;A)$ such that:
      \[
        x_1,\ldots,x_{n+1} \vdash ax_1 \cdots x_{n+1} = f(x_1, \ldots, x_{n+1})
      \]
      Then using Lemma~\ref{lem:iterated-b} we have:
      \begin{align*}
        & x_1,\ldots,x_i,\ldots,x_n 
          \vdash \mathsf{B}^{i-1}\mathsf{W}ax_1 \cdots x_i \cdots x_n
          = \mathsf{W}(ax_1 \cdots)x_i \cdots x_n
        \\&= ax_1\cdots x_i x_i \cdots x_n
        = f(x_1,\ldots,x_i,x_i,\ldots,x_n)
        = [f]\sigma^n_i(x_1,\ldots,x_i,\ldots,x_n)
      \end{align*}
      and the claim follows.
    \item Since $f \in \mathcal{M}(A^{n-1};A)$ is $(A,\bullet)$-computable we have $a \in \mathcal{M}(;A)$ such that:
      \[
        x_1,\ldots,x_n \vdash ax_1\cdots x_{n-1} = f(x_1,\ldots,x_{n-1})
      \]
      Then using Lemma~\ref{lem:iterated-b} we have:
  \begin{align*}
    &x_1,\ldots,x_{i-1},x_i,x_{i+1},\ldots,x_n \vdash
    \mathsf{B}^{i-1}\mathsf{K}ax_1\cdots x_{i-1}x_ix_{i+1} \cdots x_n
    \\&= \mathsf{K}(ax_1\cdots x_{i-1})x_ix_{i+1}\cdots x_n
    = ax_1\cdots x_{i-1}x_{i+1}\cdots x_n
    \\&= f(x_1,\ldots,x_{i-1},x_{i+1},\ldots,x_n)
    = [f]\delta_i^n(x_1,\ldots,x_{i-1},x_i,x_{i+1},\ldots,x_n)
  \end{align*}
  and the claim follows.
    \end{enumerate}
  \end{proof}

The other results concerning weak combinatory completeness follow easily:
\begin{theorem}\label{thm:structured-combinatory-complete}
  Let $\mathfrak{S}$ be a faithful cartesian club, $\mathcal{M}$ be an $\mathfrak{S}$-multicategory, and $(A,\bullet)$ be an applicative system in $\mathcal{M}$. Then we have:
  \begin{enumerate}
  \item If $\mathfrak{S}$ contains the bijections, then $(A,\bullet)$ is weakly $\mathbf{Bij}$-combinatory complete if and only if it is a $\mathsf{BCI}$-algebra.
  \item If $\mathfrak{S}$ contains the monotone injections, then $(A,\bullet)$ is weakly $\mathbf{MInj}$-\hspace{0pt}combinatory complete if and only if it is a $\mathsf{BKI}$-algebra.
  \item If $\mathfrak{S}$ contains the injections, then $(A,\bullet)$ is weakly $\mathbf{Inj}$-combinatory complete if and only if it is a $\mathsf{BCKI}$-algebra.
  \item If $\mathfrak{S}$ contains the surjections, then $(A,\bullet)$ is weakly $\mathbf{Srj}$-combinatory complete if and only if it is a $\mathsf{BCWI}$-algebra.
  \item If $\mathfrak{S}$ is $\mathbf{Fun}$, then $(A,\bullet)$ is weakly $\mathbf{Fun}$-combinatory complete if and only if it is a $\mathsf{BCKWI}$-algebra.
  \end{enumerate}
\end{theorem}
\begin{proof}
  We give the proof for $\mathsf{BCWI}$-algebras. The other cases are similar. Suppose that $(A,\bullet)$ is weakly $\mathbf{Srj}$-combinatory complete. Then $(A,\bullet)$ is weakly $\mathbf{Id}$-combinatory complete, and Theorem~\ref{thm:id-combinatory-complete} gives that it is a $\mathsf{BI}$-algebra. Moreover, $[\application^2]\tau_2^3$ and  $[\bullet^3]\sigma_2^2$ are in $\mathbf{Srj}_\mathsf{r}(A,\bullet)$, and so they are $(A,\bullet)$-computable, which is to say that $(A,\bullet)$ has a $\mathsf{C}$ and $\mathsf{W}$ combinator, and is therefore a $\mathsf{BCWI}$-algebra.

  For the converse, suppose that $(A,\bullet)$ is a $\mathsf{BCWI}$-algebra. We must show that every morphism of $\mathbf{Srj}_\mathsf{r}(A,\bullet)$ is $(A,\bullet)$-computable. Theorem~\ref{thm:id-combinatory-complete} gives that every morphism of $\mathbf{Id}_\mathsf{r}(A,\bullet)$ is $(A,\bullet)$-computable. We know $\mathbf{Srj}$ is generated by transpositions and degeneracy maps, so it suffices to show that the $(A,\bullet)$-computable morphisms are closed under the action of such maps, but this is part of Lemma~\ref{lem:other-combinators}. 
\end{proof}

\section{Combinatory Completeness and Multicategory Structure}\label{sec:multicategory-structure}

In this section we give a notion of $\mathfrak{S}$-combinatory complete applicative system, precisely characterise the difference between $\mathfrak{S}$-combinatory completeness and weak $\mathfrak{S}$-combinatory completeness, and show that a given applicative system is $\mathfrak{S}$-combinatory complete if and only if its computable maps form a sub-$\mathfrak{S}$-multicategory of the ambient one. We begin with a suitable analogue of the polynomials over an applicative system:
\begin{definition}\label{def:s-polynomial}
  Let $\mathfrak{S}$ be a faithful cartesian club, and let $(A,\application)$ be an applicative system in an $\mathfrak{S}$-multicategory $\mathcal{M}$. Let $\mathfrak{S}(A,\bullet)$ be the smallest sub-$\mathfrak{S}$-multicategory of $\mathcal{M}$ containing $\bullet \in \mathcal{M}(A,A;A)$ and all $a \in \mathcal{M}(;A)$. We refer to morphisms of $\mathfrak{S}(A,\bullet)$ as \emph{$\mathfrak{S}$-polynomials over $(A,\bullet)$}.
\end{definition}
The associated notion of combinatory completeness is as follows:
\begin{definition}\label{def:s-combinatory-complete}
  Let $\mathfrak{S}$ be a faithful cartesian club, $\mathcal{M}$ be an $\mathfrak{S}$-multicategory, and $(A,\application)$ be an applicative system in $\mathcal{M}$. We say that $(A,\bullet)$ is \emph{$\mathfrak{S}$ combinatory complete} in case every $\mathfrak{S}$-polynomial over $(A,\bullet)$ is $(A,\application)$-computable.
\end{definition}

Compare the polynomials of Definition~\ref{def:s-polynomial} to the regular polynomials of Definition~\ref{def:regular-s-polynomial}. In particular, notice that every regular polynomial is a polynomial. It follows that $\mathfrak{S}$-combinatory completeness (Definition~\ref{def:s-combinatory-complete}) implies weak $\mathfrak{S}$-combinatory completeness (Definition~\ref{def:weak-s-combinatory-complete}). The difference between $\mathfrak{S}$-combinatory completeness and weak $\mathfrak{S}$-combinatory completeness is captured by the following property:  
\begin{definition}[After Tomita~\cite{Tomita2021}]
  Let $\mathcal{M}$ be a multicategory, and let $(A,\bullet)$ be an applicative system in $\mathcal{M}$. We say that $(A,\bullet)$ is \emph{flipped} in case for all $a \in \mathcal{M}$ there exists $h \in \mathcal{M}$ such that $\bullet \circ (h,1_A) = \bullet \circ (1_A,a)$, or equivalently $x \vdash hx = xa$. We will write $a^\bullet$ to denote some such $h$ when it exists.
\end{definition}

In particular, a flipped $\mathsf{BI}$-algebra is precisely a $\mathsf{BI}^\bullet$-algebra in the sense of Tomita~\cite{Tomita2021}. Notice that any $\mathsf{BCI}$-algebra is necessarily flipped:
\begin{lemma}\label{lem:bci-flipped}
  Let $\mathcal{M}$ be a $\mathbf{Bij}$-multicategory, and let $(A,\bullet)$ be a $\mathsf{BCI}$-algebra in $\mathcal{M}$. Then $(A,\bullet)$ is flipped.
\end{lemma}
\begin{proof}
  Let $a \in \mathcal{M}(;A)$. Define $a^\bullet \in \mathcal{M}(;A)$ by $a^\bullet = \bullet^2 \circ (\mathsf{C},\mathsf{I},a)$. Then we have $\bullet \circ (a^\bullet,1_A) = (1_A,a)$ as in:
  \begin{mathpar}
    x \vdash a^\bullet x
    = CIax
    = Ixa
    = xa
  \end{mathpar}
  and the claim follows.
\end{proof}

We require a technical lemma:
\begin{lemma}\label{lem:flip-insert}
  Let $\mathcal{M}$ be a multicategory, and let $(A,\bullet)$ be a flipped $\mathsf{BI}$-algebra in $\mathcal{M}$. If $n \geq 1$ and $x_1,\ldots,x_n \vdash f(x_1,\ldots,x_n) \in \mathcal{M}(A^n;A)$ is $(A,\bullet)$-computable then so is the morphism:
  \[
    x_1,\ldots,x_{i-1},x_{i+1},\ldots,x_n \vdash f(x_1,\ldots,x_{i-1},b,x_{i+1},\ldots,x_n) \in \mathcal{M}(A^{n-1};A)
  \]
  for any $b \in \mathcal{M}(;A)$ and $1 \leq i \leq n$. 
\end{lemma}
\begin{proof}
  Given $a \in \mathcal{M}$ such that $x_1,\ldots,x_n \vdash ax_1 \cdots x_n = f(x_1,\ldots,x_n)$ and an arbitrary $b \in \mathcal{M}(;A)$ we use Lemma~\ref{lem:iterated-b} to obtain:
  \begin{align*}
    & x_1,\ldots,x_{i-1},x_{i+1},\ldots,x_n \vdash \mathsf{B}^ib^\bullet ax_1 \cdots x_{i-1}x_{i+1}\cdots x_n
    \\&= b^\bullet (ax_1 \cdots x_{i-1})x_{i+1} \cdots x_n
    = (ax_1 \cdots x_{i-1})bx_{i+1} \cdots x_n
    \\&= ax_1 \cdots x_{i-1}bx_{i+1} \cdots x_n
    = f(x_1,\ldots,x_{i-1},b,x_{i+1},\ldots,x_n)
  \end{align*}
  and the claim follows.
\end{proof}

Now, we have:
\begin{theorem}\label{thm:flipped-complete}
  Let $\mathfrak{S}$ be a faithful cartesian club, $\mathcal{M}$ be an $\mathfrak{S}$-multicategory, and $(A,\bullet)$ be a weakly $\mathfrak{S}$-combinatory complete applicative system in $\mathcal{M}$. Then $(A,\bullet)$ is flipped if and only if it is $\mathfrak{S}$-combinatory complete.
\end{theorem}
\begin{proof}
  If $(A,\bullet)$ is $\mathfrak{S}$-combinatory complete then for all $a \in \mathcal{M}(;A)$ the $\mathfrak{S}$-polynomial $x_1 \vdash x_1a$ is $(A,\bullet)$-computable, which means that there exists $a^\bullet \in \mathcal{M}(;A)$ such that $x_1 \vdash a^\bullet x_1 = x_1 a$. It follows that $\vdash a^\bullet b = b a$ for all $b \in \mathcal{M}(;A)$, which means that $(A,\bullet)$ is flipped. Conversely, if $(A,\bullet)$ is flipped, then, for any polynomial
  \begin{align}\label{eqn:poly}
      x_1,\ldots,x_n \vdash f(x_1,\ldots,x_n)
  \end{align}
  there exists $k\in\mathbb{N}$, a regular polynomial
  \begin{equation}\label{eqn:reg-poly}
      \begin{aligned}
        &x_1,\ldots,x_{i_1},y_1,x_{i_1 + 1},\ldots,x_{i_k},y_k,x_{i_k + 1},\ldots,x_n \vdash
        \\
        &f'(x_1,\ldots,x_{i_1},y_1,x_{i_1 + 1},\ldots,x_{i_k},y_k,x_{i_k + 1},\ldots,x_n)
      \end{aligned}
  \end{equation}
  and $a_1,\ldots,a_k \in \mathcal{M}(;A)$
  such that
  \begin{align*}
      x_1,\ldots,x_n \vdash f'(x_1,\ldots,x_{i_1},a_1,x_{i_1 + 1},\ldots,x_{i_k},a_k,x_{i_k + 1},\ldots,x_n) = f(x_1,\ldots,x_n)
  \end{align*}
  So by repeated application of Lemma~\ref{lem:flip-insert}, computability of (\ref{eqn:poly}) follows from computability of (\ref{eqn:reg-poly}). 
\end{proof}

Our next goal will be to establish that $\mathfrak{S}$-combinatory completeness corresponds to $\mathfrak{S}$-\hspace{0pt}multicategory structure on the category of computable morphisms. We begin with $\mathsf{Id}$-combinatory completeness, showing that the computable maps of any flipped $\mathsf{BI}$-algebra form a multicategory:
\begin{lemma}
  Let $\mathcal{M}$ be a multicategory, and let $(A,\bullet)$ be a flipped $\mathsf{BI}$-algebra in $\mathcal{M}$. Then the $(A,\bullet)$-computable maps form a sub-multicategory of $\mathcal{M}$.
\end{lemma}
\begin{proof}
  The identity morphism $1_A$ is $(A,\bullet)$-computable via the $\mathsf{I}$-combinator. To see that the $(A,\bullet)$-computable morphisms are closed under composition, suppose $g \in \mathcal{M}(A^n;A)$ and $(f_i \in \mathcal{M}(A^{m_i};A))_{i=1}^n$ are $(A,\bullet)$-computable. That is, we have $b,a_1,\ldots,a_n \in \mathcal{M}(;A)$ such that $x_1,\ldots,x_n \vdash bx_1 \cdots x_n = g(x_1,\ldots,x_n)$ and $x_1^i,\ldots,x_{m_i}^i \vdash a_ix_1^i \cdots x_{m_i}^i = f(x^i_1,\ldots,x^i_{m_i})$ for all $1 \leq i \leq n$. Since $(A,\bullet)$ is a flipped $\mathsf{BI}$-algebra we know that it is $\mathbf{Id}$-combinatory complete, which in particular means that the following $\mathbf{Id}$-polynomial is $(A,\bullet)$-computable:
  \begin{equation*}    x^1_1,\ldots,x^1_{m_1},\ldots,x^n_1,\ldots,x^n_{m_n} \vdash b(a_1x^1_{m_1} \cdots x^1_{m_1})\cdots (a_n x^n_1 \cdots x^n_{m_n})
  \end{equation*}
  This is to say that for some $c \in \mathcal{M}(;A)$ we have:
  \begin{align*}
    & x^1_1,\ldots,x^1_{m_1},\ldots,x^n_1,\ldots,x^n_{m_n} \vdash cx_1^1  \cdots x^1_{m_1} \cdots x^n_1 \cdots x^n_{m_n}
    \\&= b(a_1x^1_{m_1} \cdots x^1_{m_1})\cdots (a_n x^n_1 \cdots x^n_{m_n})
    \\&= g(f_1(x^1_1,\ldots,x^1_{m_1}),\ldots, f_n(x^n_1,\ldots,x^n_{m_n}))
    \\&= (g\circ (f_1,\ldots,f_n))( x_1^1,\ldots, x^1_{m_1},\ldots,x^n_1,\ldots, x^n_{m_n})
  \end{align*}
\end{proof}
and so $g \circ (f_1,\ldots,f_n)$ is $(A,\bullet)$-computable. The claim follows. 

Next, we consider arbitrary faithful cartesian clubs $\mathfrak{S}$:
\begin{lemma}
  Let $\mathfrak{S}$ be a faithful cartesian club, $\mathcal{M}$ be an $\mathfrak{S}$-multicategory, and $(A,\bullet)$ be an $\mathfrak{S}$-combinatory complete applicative system in $\mathcal{M}$. Then the $(A,\bullet)$-computable morphisms of $\mathcal{M}$ form a sub-$\mathfrak{S}$-multicategory of $\mathcal{M}$.
\end{lemma}
\begin{proof}
  For any faithful cartesian club $\mathfrak{S}$ the $\mathfrak{S}$-polynomials contain the $\mathbf{Id}$-polynomials, so we know that $(A,\bullet)$ is $\mathbf{Id}$-combinatory complete, which means that it is a flipped $\mathsf{BI}$-algebra, which means that the $(A,\bullet)$-computable maps form a sub-multicategory of $\mathcal{M}$. It remains to show that the $(A,\bullet)$-computable maps are closed under the action of $\mathfrak{S}$ on $\mathcal{M}$. To that end, suppose $f \in \mathcal{M}(A^m;A)$ is $(A,\bullet)$-computable and that $\mathbf{a} : m \to n \in \mathfrak{S}$. We must show that $x_1,\ldots,x_n \vdash [f]\mathbf{a} \in \mathcal{M}(A^n;A)$ is $(A,\bullet)$-computable. Since $f$ is $(A,\bullet)$-computable there is some $a \in \mathcal{M}(;A)$ such that $x_1,\ldots,x_m \vdash ax_1 \cdots x_m = f(x_1,\ldots,x_m)$. Now, $x_1,\ldots,x_m \vdash ax_1 \cdots x_m$ is an $\mathbf{Id}$-polynomial over $(A,\bullet)$, and so $x_1,\ldots,x_n \vdash [ax_1\cdots x_m]\mathbf{a}$ is an $\mathfrak{S}$-polynomial over $(A,\bullet)$, but then since $(A,\bullet)$ is $\mathfrak{S}$-combinatory complete we know that it is $(A,\bullet)$-computable. But this is $[f]\mathbf{a}$ as in:
  \begin{align*}
    & x_1,\ldots,x_n \vdash [ax_1 \cdots x_m]\mathbf{a}
      = [f(x_1,\ldots,x_m)]\mathbf{a}
      = f(x_{\mathbf{a}(1)},\ldots,x_{\mathbf{a}(m)})
      \\&= ([f]\mathbf{a})(x_1,\ldots,x_n)
  \end{align*}
  so $[f]\mathbf{a}$ is $(A,\bullet)$-computable, and the claim follows.
\end{proof}

The converse is relatively straightforward:
\begin{lemma}
  Let $\mathfrak{S}$ be a faithful cartesian club, $\mathcal{M}$ be an $\mathfrak{S}$-multicategory, and $(A,\bullet)$ be an applicative system in $\mathcal{M}$. Suppose that the $(A,\bullet)$-computable morphisms form a sub-$\mathfrak{S}$ multicategory of $\mathcal{M}$. Then $(A,\bullet)$ is $\mathfrak{S}$-combinatory complete. 
\end{lemma}
\begin{proof}
  We know that $1_A$ is computable, which is to say that for some $I \in \mathcal{M}(;A)$ we have $x_1 \vdash Ix_1 = x_1$, and so $(A,\bullet)$ has an $\mathsf{I}$-combinator. Now we have $x_1,x_2 \vdash Ix_1x_2 = x_1x_2$ and so $\bullet \in \mathcal{M}(A,A;A)$ is $(A,\bullet)$-computable. But then since the $(A,\bullet)$-computable morphisms form a sub-multicategory of $\mathcal{M}$ we have that $\bullet \circ (1_A,\bullet) \in \mathcal{M}(A^3;A)$ is $(A,\bullet)$-computable, which is to say that $(A,\bullet)$ has a $\mathsf{B}$-combinator. Similarly, any $a \in \mathcal{M}(;A)$ is $(A,\bullet)$-computable which means that so is $\bullet \circ (1_A,a)$. That is, for each $a \in \mathcal{M}(;A)$ there is some $a^\bullet \in \mathcal{M}(;A)$ such that $x_1 \vdash a^\bullet x_1 = x_1 a$. Thus, $(A,\bullet)$ is a flipped $\mathsf{BI}$-algebra, which is to say that it is $\mathbf{Id}$-combinatory complete. We must show that every $\mathfrak{S}$-polynomial over $(A,\bullet)$ is $(A,\bullet)$-computable, but this follows from the $(A,\bullet)$-computability of the $\mathbf{Id}$-polynomials together with the fact that the $(A,\bullet)$-computable morphisms are closed under the action of $\mathfrak{S}$ on $\mathcal{M}$. Thus, $(A,\bullet)$ is $\mathfrak{S}$-combinatory complete. 
\end{proof}

We have now shown:
\begin{theorem}\label{thm:combinatory-complete-multicategory}
  Let $\mathfrak{S}$ be a faithful cartesian club, $\mathcal{M}$ be an $\mathfrak{S}$-multicategory, and $(A,\bullet)$ be an applicative system in $\mathcal{M}$. Then $(A,\bullet)$ is $\mathfrak{S}$-combinatory complete if and only if the $(A,\bullet)$-computable morphisms form a sub-$\mathfrak{S}$-multicategory of $\mathcal{M}$.
\end{theorem}

For example, we have that for an applicative system $(A,\bullet)$ in a $\mathsf{Bij}$-multicategory $\mathcal{M}$, the following are equivalent:
\begin{enumerate}
\item $(A,\bullet)$ is a $\mathsf{BCI}$-algebra.
\item $(A,\bullet)$ is $\mathbf{Bij}$-combinatory complete.
\item The $(A,\bullet)$-computable maps form a sub-$\mathbf{Bij}$-multicategory of $\mathcal{M}$.
\end{enumerate}

So too for the other rows of Figure~\ref{fig:results-summary}, with the caveat that we must work with flipped applicative systems in those cases that are not flipped already (being the case of $\mathsf{BI}$-algebras and $\mathbf{Id}$-combinatory completeness and $\mathsf{BKI}$-algebras and $\mathbf{Minj}$-combinatory completeness).

\section{Closed Multicategories and Extensionality}\label{sec:closed-extensional}

Given the connection between combinatory completeness and multicategory structure established in the previous section, it is natural to wonder how additional structure on the multicategory of computable maps is reflected in terms of the underlying applicative system. In this section we answer this question for \emph{closed} multicategory structure. In particular, we show that it corresponds to the underlying applicative system being \emph{extensional} in an appropriate sense.

We begin by recalling the definition of closed multicategory:
\begin{definition}[\cite{Manzyuk2012}]
  A \emph{closed multicategory} consists of a multicategory $\mathcal{M}$ together with an object $[A,B] \in \mathcal{M}_0$ and morphism $\mathsf{ev}_{A,B} \in \mathcal{M}([A,B],A;B)$ for each $A,B \in \mathcal{M}_0$ such that for every $\Gamma \in \mathcal{M}_0^*$ and every $f \in \mathcal{M}(\Gamma,A;B)$ there exists a unique $h \in \mathcal{M}(\Gamma;[A,B])$ such that $\mathsf{ev}_{A,B} \circ (h,1_A) = f$.
\end{definition}
    
Next, we recall the notion of extensionality. Classically, an applicative system $(A,\bullet)$ is said to be \emph{extensional} in case for all $a,b \in A$, if $ax = bx$ for all $x\in A$ then $a=b$. This definition lifts easily to our setting:

\begin{definition}
  Let $\mathcal{M}$ be a multicategory, and let $(A,\bullet)$ be an applicative system in $\mathcal{M}$. Say that $(A,\bullet)$ is \emph{extensional} in case for all $a,b \in \mathcal{M}(;A)$, if $\bullet \circ (a,1_A) = \bullet \circ (b,1_A)$ then $a = b$.
\end{definition}

However, we need a slightly stronger notion of extensionality:
\begin{definition}
  Let $\mathcal{M}$ be a multicategory, and let $(A,\bullet)$ be an applicative system in $\mathcal{M}$. Say that $(A,\bullet)$ is \emph{multi-extensional} in case for all $n \in \mathbb{N}$ and $a,b \in \mathcal{M}(;A)$, if $\bullet^n \circ (a,1_A^n) = \bullet^n \circ (b,1_A^n)$ then $a = b$. 
\end{definition}
An equivalent formulation of multi-extensionality is that for all $n \in \mathbb{N}$ and $a,b \in \mathcal{M}(;A)$, if $\bullet^{n+1} \circ (a,1^{n+1}_A) = \bullet^{n+1} \circ (b,1^{n+1}_A)$ then $\bullet^n \circ (a,1_A^n) = \bullet^n \circ (b,1_A^n)$. Notice that multi-extensionality implies extensionality.

While multi-extensionality might seem to be a much stronger property than mere extensionality, they are equivalent in the classical setting. In particular, consider the following property:
\begin{definition}
  Say that a multicategory $\mathcal{M}$ has \emph{enough points} in case for all $f,g \in \mathcal{M}(A_1,\ldots,A_n ; B)$, if $f \circ (x_1,\ldots,x_n) = g \circ (x_1,\ldots,x_n)$ for all $(x_i \in \mathcal{M}(;A_i))_{i=1}^n$ then $f = g$.
\end{definition}
In particular, $\mathsf{Set}$ has enough points. We have:
\begin{lemma}
  Let $\mathcal{M}$ be a multicategory, and let $(A,\bullet)$ be an applicative system in $\mathcal{M}$. If $\mathcal{M}$ has enough points, then $(A,\bullet)$ is extensional if and only if it is multi-extensional.
\end{lemma}
\begin{proof}
  Since multi-extensionality always implies extensionality, it suffices to show that if $(A,\bullet)$ is extensional then it is multi-extensional, under the assumption that $\mathcal{M}$ has enough points. Specifically, we must show that $\forall a,b \in \mathcal{M}(;A)$ and $\forall n \in \mathbb{N}$, $\bullet^n \circ (a,1_A^n) = \bullet^n \circ (b,1_A^n)$ implies $a = b$. We proceed by induction on $n$. If $n = 0$ then $a = b$ immediately. If $n = k+1$ and $\bullet^k \circ (a,1_A^k) = \bullet^k \circ (b,1_A^k)$ implies $a = b$, suppose $\bullet^{k+1} \circ (a,1_A^{k+1}) = \bullet^{k+1} \circ (b,1_A^{k+1})$. Then for all $x_1,\ldots,x_k \in \mathcal{M}(;A)$ we have:
  \begin{align*}
    & \bullet \circ (\bullet^k \circ (a,x_1,\ldots,x_k),1_A)
      = \bullet \circ (\bullet^k \circ (a,1_A^k),1_A) \circ (x_1,\ldots,x_k,1_A)
      \\& = \bullet^{k+1} \circ (a,1_A^{k+1}) \circ (x_1,\ldots,x_k,1_A)
      = \bullet^{k+1} \circ (b,1_A^{k+1}) \circ (x_1,\ldots,x_k,1_A)
      \\& = \bullet \circ (\bullet^k \circ (b,1_A^k),1_A) \circ (x_1,\ldots,x_k,1_A)
      = \bullet \circ (\bullet^k \circ (b,x_1,\ldots,x_k),1_A)
  \end{align*}
  now extensionality of $(A,\bullet)$ gives $\bullet^k \circ (a,x_1,\ldots,x_k) = \bullet^k \circ (b,x_1,\ldots,x_k)$, from which we use that $\mathcal{M}$ has enough points to obtain $\bullet^k \circ (a,1_A^k) = \bullet^k \circ (b,1_A^k)$, which gives $a = b$ by our inductive hypothesis. Thus, $(A,\bullet)$ is multi-extensional, and the claim follows.
\end{proof}

Multi-extensionality is the property that one actually wants. In particular, we have:
\begin{lemma}
  Let $\mathfrak{S}$ be a faithful cartesian club, $\mathcal{M}$ be an $\mathfrak{S}$-\hspace{0pt}multicategory, and $(A,\bullet)$ be an $\mathfrak{S}$-combinatory complete applicative system in $\mathcal{M}$. If $(A,\bullet)$ is multi-extensional, then the sub-\hspace{0pt}multicategory of $(A,\bullet)$-computable morphisms is a closed multicategory with $[A,A] = A$ and $\mathsf{ev}_{A,A} = \bullet$.
\end{lemma}
\begin{proof}
  For any $(A,\bullet)$-computable $f \in \mathcal{M}(A^n,A;A)$ we have $a \in \mathcal{M}(;A)$ such that $\bullet^{n+1} \circ (a,1^{n+1}_A) = f$. Now since $(A,\bullet)$ is $\mathfrak{S}$ combinatory complete we know that the $\mathbf{Id}$-polynomial $\bullet^{n} \circ (a,1^n_A) \in \mathcal{M}(A^n;A)$ is $(A,\bullet)$-computable, and moreover we have:
  \begin{align*}
    & \mathsf{ev}_{A,A} \circ (\bullet^n \circ (a,1_A^n),1_A)
      = \bullet \circ (\bullet^n \circ (a,1_A^n),1_A)
      = \bullet^{n+1} \circ (a,1_A^{n+1})
      = f
  \end{align*}
  We show that $\bullet^n \circ (a,1_A^n)$ is the unique $(A,\bullet)$-computable morphism with this property. To that end, suppose that we have some $(A,\bullet)$-computable $h \in \mathcal{M}(A^n;A)$ with $\mathsf{ev}_{A,A} \circ (h,1_A) = f$. Then since $h$ is $(A,\bullet)$-computable we have $b \in \mathcal{M}(;A)$ such that $\bullet^n \circ (b,1_A^n) = h$, but then we have:
  \begin{align*}
    & \bullet^{n+1} \circ (b,1_A^{n+1})
      = \bullet \circ (\bullet^n \circ  (b,1_A^n),1_A)
      = \bullet \circ (h,1_A)
     \\& = \mathsf{ev}_{A,A} \circ (h,1_A)
      = f
      = \bullet^{n+1} \circ (a,1_A^{n+1})
  \end{align*}
  and so since $(A,\bullet)$ is multi-extensional we have $a = b$. Now we have $h = \bullet^n \circ (b,1_A^n) = \bullet^n \circ (a,1_A^n)$, and the claim follows.
\end{proof}
Conversely:
\begin{lemma}
  Let $\mathfrak{S}$ be a faithful cartesian club, $\mathcal{M}$ be an $\mathfrak{S}$-multicategory, and $(A,\bullet)$ be an $\mathfrak{S}$-combinatory complete applicative system in $\mathcal{M}$. Suppose the sub-multicategory of $(A,\bullet)$-computable morphisms is closed with $[A,A] = A$ and $\mathsf{ev}_{A,A} = \bullet$. Then $(A,\bullet)$ is multi-extensional.
\end{lemma}
\begin{proof}
  We must show that for all $n \in \mathbb{N}$ and all $a,b \in \mathcal{M}(;A)$, $\bullet^n \circ (a,1_A^n) = \bullet^n \circ (b,1_A^n)$ implies $a = b$. We proceed by induction on $n$. For the base case, $\bullet^0 \circ (a,1_A^0) = \bullet^0 \circ (b,1_A^0)$ is precisely the statement that $a = b$. For the inductive case, suppose that $\bullet^n \circ (a,1_A^n) = \bullet^n \circ (b,1_A^n)$ implies $a = b$, and that we have $\bullet^{n+1} \circ (a,1_A^{n+1}) = \bullet^{n+1} \circ (b,1^{n+1}_A)$. Then we have:
  \begin{align*}
    & \mathsf{ev}_{A,A} \circ (\bullet^n \circ (a,1_A^n),1_A)
      = \bullet \circ (\bullet^n \circ (a,1_A^n),1_A)
      = \bullet^{n+1} \circ (a,1_A^{n+1})
      \\& = \bullet^{n+1} \circ (b,1_A^{n+1})
    = \bullet \circ (\bullet^n \circ (b,1_A^n),1_A)
    = \mathsf{ev}_{A,A} \circ (\bullet^n \circ (b,1_A^n),1_A)
  \end{align*}
  But then by the universal property of the closed multicategory structure we have $\bullet^n \circ (a,1_A^n) = \bullet^n \circ (b,1_A^n)$, which gives $a = b$. The claim follows by induction.
\end{proof}

We record:
\begin{theorem}\label{thm:closed-extensional}
  Let $\mathfrak{S}$ be a faithful cartesian club, $\mathcal{M}$ be an $\mathfrak{S}$-multicategory, and $(A,\bullet)$ be an $\mathfrak{S}$-combinatory complete applicative system in $\mathcal{M}$. Then $(A,\bullet)$ is multi-extensional if and only if the sub-multicategory of $(A,\bullet)$-computable morphisms is closed with $[A,A] = A$ and $\mathsf{ev}_{A,A} = \bullet$.
\end{theorem}

\section{Weakly-Closed Operads}\label{sec:weakly-closed-operads}

In this section we give a simple characterisation of those multicategories that arise as the computable maps of some combinatory complete applicative system, which will turn out to be the \emph{weakly-closed operads}. First, recall:
\begin{definition}\label{def:operad}
  An \emph{operad} is a multicategory with exactly one object.
\end{definition}
When working with an operad $\mathcal{M}$ we will write $*_\mathcal{M}$ or simply $*$ to denote the unique object of $\mathcal{M}$. For hom-sets, we write $\mathcal{M}(n) = \mathcal{M}(*^n;*)$.

Next, we define weakly-closed multicategories, which are a weakening of the notion of closed multicategory:
\begin{definition}\label{def:weakly-closed-multicategory}
  A \emph{weakly-closed multicategory} consists of a multicategory $\mathcal{M}$ together with an object $[A,B] \in \mathcal{M}_0$ and morphism $\mathsf{ev}_{A,B} \in \mathcal{M}([A,B],A;B)$ for each $A,B \in \mathcal{M}_0$ such that for every $\Gamma \in \mathcal{M}_0^*$ and every $f \in \mathcal{M}(\Gamma,A;B)$ there exists $h \in \mathcal{M}(\Gamma;[A,B])$ such that $\mathsf{ev}_{A,B} \circ (h,1_A) = f$.
\end{definition}
The only difference between a weakly-closed multicategory and a closed multicategory is that the representative of a given morphism need not be unique. The associated notion of structure-preserving functor is straightforward:
\begin{definition}
  Let $\mathcal{M}$ and $\mathcal{N}$ be weakly-closed multicategories. Say that a multifunctor $F : \mathcal{M} \to \mathcal{N}$ is itself \emph{weakly-closed} in case $F[A,B] = [FA,FB]$ and $F(\mathsf{ev}_{A,B}) = \mathsf{ev}_{FA,FB}$ for all $A,B \in \mathcal{M}_0$.
\end{definition}

When working with a weakly-closed operad, it is unambiguous to write $\mathsf{ev} = \mathsf{ev}_{*,*}$. Moreover, since $*$ is the only object, we have $[*,*] = *$, which means that $\mathsf{ev} \in \mathcal{M}(2) = \mathcal{M}(*,*;*)$, so that $(*,\mathsf{ev})$ is an applicative system in any weakly-closed operad $\mathcal{M}$. Moreover, the computable morphisms of this applicative system are precisely the morphisms of $\mathcal{M}$: every $(*,\mathsf{ev})$-computable morphism is by definition a morphism of $\mathcal{M}$, and conversely we have:
\begin{lemma}\label{lem:combinatory-computable}
  Let $\mathcal{M}$ be a weakly-closed operad. Then every morphism of $\mathcal{M}$ is $(*,\mathsf{ev})$-computable.
\end{lemma}
\begin{proof}
  We proceed by induction on $n$. If $n = 0$ then any $f \in \mathcal{M}(0) = \mathcal{M}(;*)$ is $(*,\mathsf{ev})$-computable as in $\mathsf{ev}^0 \circ (f,1_*^0) = 1_* \circ (f) = f$. If all morphisms of $\mathcal{M}(n)$ are $(*,\mathsf{ev})$-computable and $f \in \mathcal{M}(n+1)$ then since $\mathcal{M}$ is weakly-closed we know that for some $h \in \mathsf{M}(*^n;[*,*]) = \mathsf{M}(*^n;*) = \mathsf{M}(n)$ we have $\mathsf{ev} \circ (h,1_*) = f$. Now by assumption we have that $h$ is $(*,\mathsf{ev})$-computable, and so there exists some $a \in \mathcal{M}(;*)$ such that $\mathsf{ev}^n \circ (a,1_*^n) = h$. This gives:
  \begin{align*}
    & \mathsf{ev}^{n+1} \circ (a,1^{n+1}_*)
      = \mathsf{ev} \circ (\mathsf{ev}^n,1_*) \circ (a,1_*^n,1_*)
      \\&= \mathsf{ev} \circ (\mathsf{ev}^n \circ (a,1_*^n),1_*)
      = \mathsf{ev} \circ (h,1_A)
      = f
  \end{align*}
  Thus $f$ is $(*,\mathsf{ev})$-computable, and the claim follows by induction.
\end{proof}

This means that such applicative systems are always combinatory complete:
\begin{lemma}\label{lem:m-computable-maps}
  Let $\mathfrak{S}$ be a faithful cartesian club, and let $\mathcal{M}$ be a weakly-closed $\mathfrak{S}$-operad. Then $(*,\mathsf{ev})$ is $\mathfrak{S}$-combinatory complete.
\end{lemma}
\begin{proof}
  Lemma~\ref{lem:combinatory-computable} gives that every morphism of $\mathcal{M}$ is $(*,\mathsf{ev})$-\hspace{0pt}computable, and of course any $(*,\mathsf{ev})$-\hspace{0pt}computable morphism of $\mathcal{M}$ is in $\mathcal{M}$, so in particular the $(*,\mathsf{ev})$-computable morphisms trivially form a sub-$\mathfrak{S}$-multicategory of $\mathcal{M}$. Then Theorem~\ref{thm:combinatory-complete-multicategory} gives that $(*,\mathsf{ev})$ is $\mathfrak{S}$-combinatory complete.
\end{proof}

Thus, weakly-closed operads induce combinatory complete applicative systems. The converse is also true: given any combinatory complete applicative system its computable morphisms form a weakly-closed operad. We proceed to make this statement a formal one. To this end, we organise a given applicative system $(A,\bullet)$ in a multicategory $\mathcal{M}$ as a tuple $(\mathcal{M},A,\bullet)$. We emphasize that such tuples are to be understood as applicative systems \emph{in context}, where the ``context'' is the ambient multicategory $\mathcal{M}$. More formally:

\begin{definition}
  Let $\mathfrak{S}$ be a faithful cartesian club. An \emph{$\mathfrak{S}$-triple} $(\mathcal{M},A,\bullet)$ consists of an $\mathfrak{S}$-\hspace{0pt}multicategory $\mathcal{M}$ together with an $\mathfrak{S}$-combinatory complete applicative system $(A,\bullet)$ in $\mathcal{M}$. A \emph{morphism} of $\mathfrak{S}$-triples $F : (\mathcal{M},A,\bullet) \to (\mathcal{N},B,*)$ consists of an $\mathfrak{S}$-multifunctor $F : \mathcal{M} \to \mathcal{N}$ such that $FA = B$ and $F(\bullet) = *$. Write $\mathsf{T}_\mathfrak{S}$ for the category of $\mathfrak{S}$-triples and their morphisms.
\end{definition}

We are interested in relating these applicative systems in context to weakly-closed operads, with the latter being like an applicative system \emph{standing alone}. More precisely, a weakly-closed operad can be seen as a combinatory complete applicative system in the context of its category of computable morphisms. For the purpose of this comparison, we require categories of weakly-closed operads:
\begin{definition}
  Let $\mathfrak{S}$ be a faithful cartesian club. Define $\mathsf{O}_\mathfrak{S}$ to be the category of weakly-closed $\mathfrak{S}$-operads and weakly-closed $\mathfrak{S}$-multifunctors between them.
\end{definition}

We proceed to construct adjunctions, parameterised by a faithful cartesian club $\mathfrak{S}$, between our categories of applicative systems in context and of weakly-closed operads. To begin, we construct what will eventually be the object mapping of the right adjoint, which is to construct a weakly-closed operad from a combinatory-complete applicative system in context:

\begin{definition}\label{def:r-objects}
  Let $\mathfrak{S}$ be a faithful cartesian club, $\mathcal{M}$ be an $\mathfrak{S}$-multicategory, and $(A,\bullet)$ be an $\mathfrak{S}$-combinatory complete applicative system in $\mathcal{M}$. We define the weakly-closed $\mathfrak{S}$-operad $R(\mathcal{M},A,\bullet)$ to have morphisms $\overline{f} \in R(\mathcal{M},A,\bullet)(n)$ for each $(A,\bullet)$-computable morphism $f \in \mathcal{M}(A^n;A)$. Composition and identities are as in $\mathcal{M}$, as is the $\mathfrak{S}$-multicategory structure. Explicitly, $\overline{f} \circ (\overline{g_1},\ldots,\overline{g_n}) = \overline{f \circ (g_1,\ldots,g_n)}$, $1_{*} = \overline{1_A}$, and $[\,\overline{f}\,]\mathbf{a} = \overline{[f]\mathbf{a}}$.
\end{definition}

We show that this is well-defined:
\begin{lemma}\label{lem:computable-combinatory}
  Let $\mathfrak{S}$ be a faithful cartesian club, $\mathcal{M}$ be an $\mathfrak{S}$-multicategory, and $(A,\bullet)$ be an $\mathfrak{S}$-combinatory complete applicative system in $\mathcal{M}$. Then $R(M,A,\bullet)$ is a weakly-closed $\mathfrak{S}$-operad. 
\end{lemma}
\begin{proof}
  That $R(\mathcal{M},A,\bullet)$ is an $\mathfrak{S}$-operad follows from the fact that the $(A,\bullet)$-computable morphisms form a sub-$\mathfrak{S}$-multicategory of $\mathcal{M}$ (Theorem~\ref{thm:combinatory-complete-multicategory}). For the weakly closed structure, we have no choice but to take $[*,*] = *$, and we define $\mathsf{ev}_{*,*} = \overline{\bullet} \in R(\mathcal{M},A,\bullet)(2)$.

  Suppose that $\overline{f} \in R(\mathcal{M},A,\bullet)(n+1)$. Then for the corresponding $(A,\bullet)$-computable morphism $f \in \mathcal{M}(A^{n+1};A)$ we have some $a \in \mathcal{M}(;A)$ such that $\bullet^{n+1} \circ (a,1^{n+1}_A) = f$. Now since $(A,\bullet)$ is $\mathfrak{S}$ combinatory complete we know that the $\mathbf{Id}$-polynomial $\bullet^{n} \circ (a,1^n_A) \in \mathcal{M}(A^n;A)$ is $(A,\bullet)$-computable, and so we have $\overline{\bullet^{n} \circ (a,1^n_A)} \in R(\mathcal{M},A,\bullet)(n)$ such that:
  \begin{align*}
    & \mathsf{ev}_{*,*} \circ \left(\overline{\bullet^n \circ (a,1_A^n)}\,,1_*\right)
      = \overline{\bullet} \circ \left(\overline{\bullet^n \circ (a,1_A^n)}\,,\overline{1_A}\right)
      \\&= \overline{\bullet \circ (\bullet^n \circ (a,1_A^n), 1_A)}
      = \overline{\bullet^{n+1} \circ (a,1_A^{n+1})}
      = \overline{f}
  \end{align*}
  and it follows that $R(\mathcal{M},A,\bullet)$ is weakly closed. 
\end{proof}

The above mapping extends to a functor as follows:
\begin{lemma}
  For any faithful cartesian club $\mathfrak{S}$ there is a functor $R : \mathsf{T}_\mathfrak{S} \to \mathsf{O}_\mathfrak{S}$ with object mapping as in Definition~\ref{def:r-objects} and with arrow mapping given as follows: if $F : (\mathcal{M},A,\bullet) \to (\mathcal{N},B,\odot)$ in $\mathsf{T}_\mathfrak{S}$ then $R(F) : R(\mathcal{M},A,\bullet) \to  R(\mathcal{N},B,\odot)$ is the weakly-closed $\mathfrak{S}$-multifunctor given by $R(F)(*) = *$ on objects and by $R(F)(\overline{f}) = \overline{F(f)}$ on morphisms. 
\end{lemma}
\begin{proof}
Lemma~\ref{lem:combinatory-computable} tells us that the object mapping is well-defined. We show $R(F)$ is well-defined as a morphism of $\mathsf{O}_\mathfrak{S}$: for composition, we have:
\begin{align*}
  & R(F)(\overline{f} \circ (\overline{g_1},\ldots,\overline{g_n}))
    = R(F)(\overline{f \circ (g_1,\ldots,g_n)})
    = \overline{F(f\circ (g_1,\ldots,g_n))}
  \\&= \overline{F(f) \circ (F(g_1),\ldots,F(g_n))}
  = \overline{F(f)} \circ (\overline{F(g_1)},\ldots,\overline{F(g_n)})
    \\&= R(F)(\overline{f}) \circ (R(F)(\overline{g_1}),\ldots,R(F)(\overline{g_n}))
\end{align*}
for identities, we have:
\begin{align*}
  & R(F)(1_*) = R(F)(\overline{1_A}) = \overline{F(1_A)} = \overline{1_{FA}} = \overline{1_B} = 1_* = 1_{F*}
\end{align*}
and for the action of $\mathfrak{S}$, we have:
\begin{align*}
  & R(F)([\overline{f}]\mathbf{a})
    = R(F)(\overline{[f]\mathbf{a}})
    = \overline{F([f]\mathbf{a})}
    = \overline{[F(f)]\mathbf{a}}
    = [\overline{F(f)}]\mathbf{a}
    = [R(F)(\overline{f})]\mathbf{a}
\end{align*}
it follows that $R(f)$ is an $\mathfrak{S}$-multifunctor. Moreover, we have:
\[
  R(F)([*,*]) = R(F)(*) = * = [*,*] = [R(F)(*),R(F)(*)]
\]
and
\[
  R(F)(\mathsf{ev}) = R(F)(\overline{\bullet}) = \overline{F(\bullet)} = \overline{\odot} = \mathsf{ev}
\]
which means that $R(F)$ is weakly-closed, and is a morphism of $\mathsf{O}_\mathfrak{S}$ as required.

We must also show that $R$ itself is a functor. For identities, we find that $R(1_\mathcal{(M,A,\bullet)})$ is the identity as in:
\[
  R(1_\mathcal{(M,A,\bullet)})(\overline{f}) = R(1_\mathcal{M})(\overline{f}) = \overline{1_{\mathcal{M}}(f)} = \overline{f}
\]
and for composition we have $R(F \circ G) = R(F) \circ R(G)$ as in:
\[
  R(F \circ G)(\overline{f})
  = \overline{F(G(f))}
  = R(F)(\overline{G(f)})
  = R(F)(R(G)(\overline{f}))
  = (R(F) \circ R(G))(\overline{f})
\]
Thus, $R$ is in fact a functor.
\end{proof}

This functor will be the right adjoint of our adjunction. We proceed to construct the left adjoint:
\begin{lemma}
  For any faithful cartesian club $\mathfrak{S}$ there is a functor $L : \mathsf{O}_\mathfrak{S} \to \mathsf{T}_\mathfrak{S}$ with object mapping given by $L(\mathcal{M}) = (\mathcal{M},\mathsf{ev},\bullet)$ and arrow mapping given as follows: if $F : \mathcal{M} \to \mathcal{N}$ in $\mathsf{O}_\mathfrak{S}$ then $L(F) : L(\mathcal{M}) \to L(\mathcal{N})$ by $L(F) = F$. That is, $L(F)$ is the morphism $(\mathcal{M},*_\mathcal{M},\mathsf{ev}) \to (\mathcal{N},*_\mathcal{N},\mathsf{ev})$ of $\mathsf{T}_\mathfrak{S}$ given by the $\mathfrak{S}$-multifunctor $F : \mathcal{M} \to \mathcal{N}$.
\end{lemma}
\begin{proof}
Lemma~\ref{lem:m-computable-maps} gives that the object mapping is well-defined. For $L(F)$ to be  well-defined as a morphism of $\mathsf{T}_\mathfrak{S}$ we require $F(\mathsf{ev}) = \mathsf{ev}$ and $F(*_\mathcal{M}) = *_\mathcal{N}$. The former holds because $F$ is weakly-closed, and the latter is immediate. That $L$ itself preserves composition and identities is immediate.
\end{proof}

We are now ready to construct our adjunction:
\begin{theorem}\label{thm:adjunction}
  Let $\mathfrak{S}$ be a faithful cartesian club. Then there is an adjunction:
  \[
    \begin{tikzcd}
      \mathsf{O}_\mathfrak{S} \ar[r,phantom,"{\scriptstyle \perp}"] \ar[r,"L", shift left=0.5em] & \mathsf{T}_\mathfrak{S} \ar[l,"R", shift left=0.5em]
    \end{tikzcd}
  \]
  Moreover, the unit of this adjunction is an isomorphism.
\end{theorem}
\begin{proof}
  The components $\eta_\mathcal{M} : \mathcal{M} \to R(\mathcal{M},*,\mathsf{ev})$ of the unit $\eta : 1 \to R \circ L$ are defined by $\eta_\mathcal{M}(*) = *$ on objects and by $\eta_\mathcal{M}(f) = \overline{f}$ on morphisms. The components $\varepsilon_{(\mathcal{M},A,\bullet)} : (R(\mathcal{M},A,\bullet),*,\mathsf{ev}) \to (\mathcal{M},A,\bullet)$ of the counit $\varepsilon : L \circ R \to 1$ are defined by $\varepsilon_{(\mathcal{M},A,\bullet)}(*) = A$ on objects and by $\varepsilon_{(\mathcal{M},A,\bullet)}(\overline{f}) = f$ on morphisms. That the components of the unit and counit are well-defined as morphisms of the appropriate category is immediate. Next, we show that the unit and counit are natural. For the unit, let $F : \mathcal{M} \to \mathcal{N}$ in $\mathsf{O}_\mathfrak{S}$. Then we have:
  \begin{align*}
    & (\eta_\mathcal{N} \circ F)(f)
      = \eta_\mathcal{N}(F(f))
      = \overline{F(f)}
      = R(F)(\overline{f})
      \\&= R(F)(\eta_\mathcal{M}(f))
      = (R(F) \circ \eta_\mathcal{M})(f)
  \end{align*}
  as required. For the counit, let $F : (\mathcal{M},A,\bullet) \to (\mathcal{N},B,\odot)$ in $\mathsf{T}_\mathfrak{S}$. Then:
  \begin{align*}
    & (\varepsilon_{(N,B,\odot)} \circ (L \circ R)(F))(\overline{f})
      = \varepsilon_{(N,B,\odot)}(L(R(F))(\overline{f}))
      = \varepsilon_{(N,B,\odot)}(R(F)(\overline{f}))
      \\&= \varepsilon_{(N,B,\odot)}(\overline{F(f)})
      = F(f)
      = F(\varepsilon_{(\mathcal{M},A,\bullet)}(\overline{f}))
      = (F \circ \varepsilon_{(\mathcal{M},A,\bullet)})(\overline{f})
  \end{align*}
  and so the unit and counit are natural. We proceed to show that the triangle identities hold. First, we have $(\varepsilon_L \circ L\eta) =  1_L$ as in:
  \begin{align*}
    & (\varepsilon_L \circ L\eta)_\mathcal{M}(f)
      = \varepsilon_{L(\mathcal{M})}(L(\eta_\mathcal{M})(f))
      = \varepsilon_{L(\mathcal{M})}(\eta_\mathcal{M}(f))
      = \varepsilon_{L(\mathcal{M})}(\overline{f})
      = f
  \end{align*}
  Second, we have $(R\varepsilon \circ \eta_R) = 1_R$ as in:
  \begin{align*}
    & (R\varepsilon \circ \eta_R)_{(\mathcal{M},A,\bullet)}(\overline{f})
      = R(\varepsilon_{(\mathcal{M},A,\bullet)})(\eta_{R(\mathcal{M},A,\bullet)}(\overline{f}))
     \\& = R(\varepsilon_{(\mathcal{M},A,\bullet)})(\overline{\overline{f}})
    = \overline{\varepsilon_{(\mathcal{M},A,\bullet)}(\overline{f})}
    = \overline{f}
  \end{align*}
  Thus, we have an adjunction. It remains to show that $\eta : 1 \to R \circ L$ is an isomorphism. It suffices to show that each component $\eta_\mathcal{M} : \mathcal{M} \to R(\mathcal{M},*,\mathsf{ev})$ has an inverse. To that end, define $\eta_\mathcal{M}^{-1} : R(\mathcal{M},*,\mathsf{ev}) \to \mathcal{M}$ by $\eta^{-1}_\mathcal{M}(*) = *$ on objects, and by $\eta^{-1}_\mathcal{M}(\overline{f}) = f$ on morphisms. That this is well-defined as a morphism of $\mathsf{O}_\mathfrak{S}$ is immediate, and we have:
  \begin{mathpar}
    \eta_\mathcal{M}(\eta^{-1}_\mathcal{M}(\overline{f})) = \eta_\mathcal{M}(f) = \overline{f}

    \text{and}

    \eta^{-1}_\mathcal{M}(\eta_\mathcal{M}(f)) = \eta^{-1}_\mathcal{M}(\overline{f}) = f
  \end{mathpar}
  Thus, $\eta$ is a natural isomorphism.
\end{proof}

In particular, an immediate consequence of Theorem~\ref{thm:adjunction} is that the functor $L$ is fully faithful, so that $\mathsf{O}_\mathfrak{S}$ is equivalent to its essential image in $\mathsf{T}_\mathfrak{S}$ under $L$~\cite[Prop. 1.3]{Gabriel1967}. That is, $\mathsf{O}_\mathfrak{S}$ is equivalent to the full subcategory of $\mathsf{T}_\mathfrak{S}$ on the tuples consisting of an $\mathfrak{S}$-combinatory complete applicative system in the context of its own operad of computable morphisms.

The operad of computable morphisms of a given applicative system is canonical in that it is the smallest context that the applicative system in question may sensibly inhabit. Thus, what Theorem~\ref{thm:adjunction} tells us is that weakly-closed operads are combinatory complete applicative systems \emph{standing alone}, from which only this canonical context is recoverable.

We end by remarking explicitly that, modulo this formal difference between applicative systems in context and applicative systems standing alone, our results demonstrate that $\mathfrak{S}$-combinatory complete applicative systems \emph{are the same thing} as weakly-closed $\mathfrak{S}$-operads, characterising the former.

\section{Concluding Remarks}\label{sec:conclusion}

We have introduced a general notion of combinatory completeness parameterised by a faithful cartesian club, and have used it systematically to obtain characterisations of a number of different kinds of applicative system, summarised in Figure~\ref{fig:results-summary}. Our work subsumes the classical characterisation of combinatory algebras as combinatory complete applicative systems, working both in more general settings and with other notions of completeness. We have given a categorical characterisation of combinatory completeness in terms of structured multicategories, and of extensionality in terms of closed multicategories. Moreover, we have exhibited a close correspondence between combinatory complete applicative systems and weakly-closed operads.

We end by discussing a few directions for future work. First, the results of Section~\ref{sec:weakly-closed-operads} recall the work of Hyland on operadic models of the untyped lambda calculus modulo $\beta$-conversion~\cite{Hyland2017}, which are found therein to be the \emph{semi-closed cartesian operads} (in our terminology the semi-closed $\mathbf{Fun}$-operads). It is important to note that Hyland's notion of semi-closed operad is strictly stronger than the notion of weakly-closed operad we use in this paper (see ~\cite[Sect. 1]{Hasegawa2023} for a detailed presentation of semi-closed operads). We conjecture that the relationship between combinatory complete applicative systems and weakly-closed operads can be extended, yielding an analogous relationship between (substructural notions of) $\lambda$-models (see e.g.,~\cite{Hindley2008,Jacobs1993}) and semi-closed operads.

Second, we would like to extend our approach to capture other sorts of applicative system. For example, in this paper the $\mathsf{BWI}$-algebras and $\mathsf{BKWI}$-algebras are conspicuously missing. More broadly we would like to be able to express combinatory completeness of \emph{partial} combinatory algebras, and also more exotic kinds of applicative system, such as the linear combinatory algebras of Abramsky et al.~\cite{Abramsky2002}, the monadic combinatory algebras of Cohen et al.~\cite{Cohen2025}, the braided and ribbon combinatory algebras of Hasegawa et al.~\cite{Hasegawa2021,Hasegawa2024}, and the various sorts of planar combinatory algebra considered by Tomita~\cite{Tomita2021,Tomita2022,Tomita2023}. A related question is for which notions of generalized multicategory (see e.g.,~\cite{Crutwell2010}) the concepts of applicative system and combinatory completeness make sense, which may lead to any number of variations on the theme of combinatory algebra.

\subsection*{Acknowledgements}

We thank the anonymous reviewers of the earlier conference paper that this article extends for identifying a number of mistakes and omissions in the original submission, and thank Matt Earnshaw and Mike Shulman for useful conversations.

\bibliographystyle{fundam}
\bibliography{citations.bib}

\end{document}